\newtheorem{theorem}{Theorem}[section]
\newtheorem{lemma}[theorem]{Lemma}
\newtheorem{remark}[theorem]{Remark}
\newtheorem{corollary}[theorem]{Corollary}
\newcommand{\proba}{{\mathbb P}}
\newcommand{\eps}{\varepsilon}
\newcommand{\expectation}{{\mathbb E}}
\newcommand{\Oun}{\mathcal{O}(1)}
\newcommand{\V}{\mathcal{V}}
\font\gfont=cmmi10 scaled \magstep{2}     
\newcommand{\gdelta}{\hbox{\gfont \char14}}
\def\a0{4-\sqrt{15}}
\def\R{{\mathbb R}}
\def\N{{\mathbb N}}
\def\F{{\mathcal F}}
\def\No{{\mathscr N}}
\def\L{{\mathscr L}}
\def\lip{\textup{Lip}}
\def\cov{\textup{Cov}}
\def\om{\omega}
\def\eps{\epsilon}
\def\y{{\tilde y}}
\def\z{{\tilde z}}
\def\c{{\mathbf c}}
\begin{document}

\title{{\bf A concentration inequality\\
for interval maps\\with an indifferent fixed point}}

\bigskip

\author{{\bf J.-R. Chazottes}, {\bf P. Collet}\\
{\small Centre de Physique Th\'eorique}\\
{\small CNRS, Ecole Polytechnique}\\
{\small 91128 Palaiseau Cedex, France}\\
{\small e-mail: chazottes@cpht.polytechnique.fr},\\
{\small collet@cpht.polytechnique.fr}\\
{\bf F. Redig}\\
{\small Mathematisch Instituut Universiteit Leiden}\\
{\small Niels Bohrweg 1, 2333 CA Leiden, The Netherlands}\\
{\small e-mail: redig@math.leidenuniv.nl}\\
{\bf E. Verbitskiy}\\
{\small Philips Research, HTC 36 (M/S 2),  5656 AE Eindhoven The Netherlands}\\
{\small e-mail: evgeny.verbitskiy@philips.com}
}

\maketitle

\begin{abstract}
For a map of the unit interval with an indifferent fixed point,
we prove an upper bound for the variance of all observables of $n$ variables $K:[0,1]^n\to\R$ which are componentwise
Lipschitz. The proof is based on coupling and decay of correlation properties of the map.
We then give various applications of this inequality to the
almost-sure central limit theorem, the kernel density estimation, the
empirical measure and the periodogram.

\bigskip

\noindent {\bf key-words}: variance, componentwise Lipschitz
observable, almost-sure central limit theorem, kernel density
estimation, empirical measure, periodogram, shadowing, Kantorovich-Rubinstein theorem.
\end{abstract}

\tableofcontents

\section{Introduction}

Nowadays, concentration inequalities are a fundamental tool in 
probability theory and statistics. We refer the reader
to, {\em e.g.}, \cite{ledoux,dl,massart,mcd,talagrand}. In particular,  
they also turn out to be essential tools to develop a non-asymptotic theory in
statistics, exactly as the central limit theorem and large deviations
are known to play a central part in the asymptotic theory. Besides the
non-asymptotic aspect of concentration inequalities, the crucial
point is that they allow in principle to study random variables
$Z_n=K(X_1,\ldots,X_n)$ that ``smoothly'' depend on the underlying
random variables $X_i$, but otherwise can be defined in an
indirect or a complicated way, and for which explicit computations can
be very hard, even in the case where the $X_i$'s are independent. 

In the context of dynamical systems, central limit theorems and their
refinements, large deviations, and other type of limit theorems have
been proved, almost exclusively for Birkhoff sums of sufficiently
``smooth'' observables. But many natural observables are not Birkhoff sums.
Let us just mention a typical example (see below for more examples), namely the
so-called power spectrum, that is, the Fourier transform of the
correlation function, whose estimator is the integral of the
periodogram. This is a very complicated quantity from the analytic
point of view. Besides the computational difficulties proper to
each observable, one would like to have a systematic method to approach the
questions of fluctuations of observables, instead of designing a
particular method for each case. 

A possible method is concentration inequalities. An
additional difficulty comes in for dynamical systems, namely
the fact that we loose independence, except in very special cases, and
that the mixing properties of dynamical systems are not as nice as for
stochastic processes encountered usually in probability theory, such as
Markov chains, renewal processes, {\em etc}.
So new approaches have to be proposed, based on typical tools of dynamical
systems like the spectral gap (when it exists) of the transfer operator  and the
decay of correlations. The first concentration inequality in this
context was obtained by Collet {\em et al.} \cite{cms} for uniformly
expanding maps of the interval, without assuming the existence of a
Markov partition. They obtained the so-called Gaussian concentration
inequality (also called exponential concentration inequality) by
bounding the exponential moment of any observable of $n$ variables
only assuming that it is componentwise Lipschitz. They deduced several
applications (kernel density estimation, shadowing, etc). In the hope of
proving concentration inequalities for more general dynamical systems,
one can start with an inequality for the variance, leading to a polynomial concentration inequality.
This was indeed done in \cite{encorenous} for a large class of non-uniformly hyperbolic
systems modeled by a ``Young tower with exponential return times''
\cite{youngexp}.
In \cite{nous}, the authors of \cite{encorenous} showed the usefulness of this variance
inequality (therein called ``Devroye inequality'') through various examples.
Let us also mention another approach based on coupling \cite{cckr,collet}
that gives, {\em e.g.}, an altenative proof of the Gaussian deviation inequality
in the case of uniformly expanding maps of the interval, and also used in the context
of Gibbs random fields.

Regarding Birkhoff sums of ``smooth'' observables ({\em e.g.}, H\"older),
central limit theorems and large deviation estimates have been proved both for
systems modeled by a Young towers with exponential return-time tail
mentioned above and those with a summable return-time
tail, see, {\em e.g.}, \cite{youngexp,youngpoly,melbourne,luc}.
So, a natural question is to try to prove an inequality for the variance
of any observable of $n$ variables only assuming it is componentwise
Lipschitz, as in \cite{encorenous}, but relaxing the exponential decay
of the return-time tail of Young towers \cite{youngpoly}. This would
give a way to analyze fluctuations of complicated observables, which
are not Birkhoff sums. 
The simplest and classical example is a map of the unit interval with an indifferent fixed
point.
In this paper, we prove a variance inequality for the map
$T(x)=x+2^\alpha x^{1+\alpha}$ when $x\in[0,1/2[$ and strictly expanding on $[1/2,1]$,
when $\alpha$ is small enough (Theorem \ref{main}). However, the proof verbatim
applies to the class of maps with a unique indifferent fixed point considered in \cite{hu}.
The major difference with the situation in \cite{encorenous,cms} is
that the transfer operator has no spectral gap and that the decay of
correlations is polynomial instead of being exponential. Therefore
we develop a different approach based on decay of correlations.
We need to control the covariance of $C^0$ functions and Lipschitz
functions, which is done by H. Hu \cite{hu}.
An important ingredient is coupling through the Kantorovich-Rubinstein duality theorem.
At present, we are not able to construct explicitely a coupling for the backward process as the one
constructed in \cite{barbour} for uniformly expanding maps of the interval. This explicit coupling
was used in \cite{collet} in order to prove the Gaussian concentration inequality.
After proving the variance inequality, we show various applications of it, namely, to the almost-sure central limit theorem, the kernel density
estimation, the empirical measure, the integrated periodogram and the shadowing. 

The paper is organized as follows. 
Section \ref{defs} contains the necessary informations on the maps,
while Section \ref{main} contains our main result, namely the variance inequality. In Section
\ref{applications} we give various applications of it. Section
\ref{proof} contains the proof of the Devroye inequality.

\section{The map and its properties}\label{defs}

\subsection{The map and the invariant measure}\label{mark}

For the sake of definiteness, we consider the maps $T:[0,1]\circlearrowleft$ such that on $[0,1/2[$
$$
T(x)=x+2^\alpha x^{1+\alpha}
$$
and such that $|T' x|>1$ and $|T"(x)|<\infty$ on $[1/2,1]$. In fact, all what follows is valid
under the assumptions of H. Hu  \cite{hu}.

For $\alpha\in[0,1[$, this map admits an absolutely continuous invariant probability measure
$d\mu(x)=h(x) dx$, where $h(x)\sim x^{-\alpha}$ when $x$ tends to $0$.

We define the sequence of points $x_\ell$ by $x_0=1$, $x_1=1/2$ and for
$\ell\geq 2$ $T(x_\ell)= x_{\ell-1}$ and $x_\ell<1/2$. It is easy to verify
that the sequence of intervals
$$
I_\ell:=]x_{\ell+1}, x_\ell],
$$
for $\ell=0,1,2,\ldots$, is a Markov partition of the interval $]0,1]$.

We have the behavior, see {\em e.g.} \cite{hu},
\begin{equation}\label{AMP}
|I_\ell |\sim \ell^{-\frac{1}{\alpha}-1} ,
\;
x_\ell\sim\ell^{-\frac{1}{\alpha}}.
\end{equation}

\subsection{Decay of correlations}

The covariance or correlation coefficient $\cov_{v,w}(\ell)$ of two $L^2(\mu)$ functions $v,w:[0,1]\to\R$
is defined, as usual, by
\[
\cov_{v,w}(\ell)=\int v\circ T^\ell \ w\ d\mu -\int v \ d\mu \int w\ d\mu.
\]
When $v=w$, we simply write $\cov_{v}$.

Various people established the (optimal) decay of correlations for the map $T$, namely
$\cov_{v,w}(\ell)\approx\ell^{-\frac{1}{\alpha}+1}$. 
In, {\em e.g.}, \cite{youngpoly} this is proved for $u,v$
both being H\"older. As it will turn out, we need the following estimate proved in \cite{hu}.
There exists a constant $C>0$ such that, for all
$v\in C^0$ and $w$ Lipschitz, we have the following decay:
\begin{equation}\label{decay}
\left|\cov_{v,w}(\ell)\right| 
\leq C  \thinspace \| v\|_{C^0}\ \lip(w)\ \gamma_\ell 
\end{equation}
where
\begin{equation}
\gamma_\ell:=\ell^{-\frac{1}{\alpha}+1}
\end{equation}
and where
$$
\lip(w)=\sup_{x\neq x'}\frac{|w(x)-w(x')|}{|x-x'|}\cdot
$$

This follows from \cite{hu}. The fact that 
$C$ {\em does not depend} on $v$, $w$ is the consequence
of Theorem B.1 in \cite{nous}.

\subsection{Central limit theorem}

Using \cite[Proposition 5.2]{hu} and \cite{liv}, we have a central limit theorem for Lipschitz
observables when $0<\alpha<1/2$:
for any $v$ Lipschitz which is not of the form $h-h\circ T$ and such that $\int v d\mu=0$, we have
\begin{equation}\label{clt}
\mu\left(\frac{1}{\sigma_v\sqrt{n}}\sum_{j=0}^{n-1}v\circ T^j 
\leq t \right)\stackrel{n\to\infty}{\longrightarrow}
\frac{1}{\sqrt{2\pi}}\int_{-\infty}^t e^{-\xi^2/2}\thinspace
d\xi,\ \forall t\in\R,
\end{equation}
where
$$
\sigma^2_v=\cov_{v}(0)+2\sum_{\ell=1}^\infty \cov_{v}(\ell)>0.
$$

\section{Variance inequality}\label{main}

Our main theorem is an upper-bound for the variance
of any componentwise Lipschitz function.

We introduce the convenient notations
$$
T_p^{q}(x)= (T^p (x), T^{p+1}(x), \ldots,T^{q}(x))\quad\textup{and}\quad
z_p^q=z_p,z_{p+1},\ldots,z_{q}
$$
for $0\leq p\leq q$. With this notation, if we take a function
$K$ of $n$ variables, we write, {\em e.g.},
$K(z_1^j,z_j,z_{j+1}^n)$ for $K(z_1,z_2,\ldots,z_n)$.

A real-valued function $K$ on $[0,1]^n$ is said to be componentwise
Lipschitz if, for all $1\leq j\leq n$, the following quantities
are finite:
$$
\lip_j(K):=\sup_{z_1,z_2,\ldots, z_j,z_{j+1},\ldots,z_n}\thinspace
\sup_{z_j\neq \hat{z}_j}
\frac{
\left|
K(z_1^{j-1},z_j,z_{j+1}^n)- K(z_1^{j-1},\hat{z}_j,z_{j+1}^n)
\right|
}
{|z_j-\hat{z}_j|}\cdot
$$

Our main theorem reads as follows.

\begin{theorem}\label{devroye}
Let $T$ be the map defined in Section \ref{defs}.
Then, for any $\alpha\in[0,\a0[$, 
there exists $D=D(\alpha)>0$ such that, for any componentwise Lipschitz
function $K:[0,1]^n\to\R$, we have
\begin{equation}\label{devineq}
\int 
\left(K(T_0^{n-1}(x))-\int K(T_0^{n-1}(y))\thinspace d\mu(y)\right)^2 d\mu(x)
\leq D \sum_{j=1}^n (\lip_j(K))^2.
\end{equation}
\end{theorem}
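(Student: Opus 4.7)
My plan is to bound the variance by a reverse-martingale decomposition adapted to the filtration generated by the dynamics, and to control each martingale increment by combining the componentwise Lipschitz property of $K$ with the asymmetric decay of correlations \eqref{decay} via the Kantorovich--Rubinstein duality theorem.

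Set $\mathcal{G}_j := T^{-(j-1)}(\mathcal{B})$ and $Z_j := \E[K \circ T_0^{n-1} \mid \mathcal{G}_j]$ for $j \geq 1$, where $\mathcal{B}$ is the Borel $\sigma$-algebra of $[0,1]$. Since $K \circ T_0^{n-1}$ is $\mathcal{B}$-measurable one has $Z_1 = K \circ T_0^{n-1}$, while mixing of $T$ gives $Z_j \to \int K \circ T_0^{n-1}\, d\mu$ in $L^2(\mu)$ as $j\to\infty$. Orthogonality of reverse-martingale differences then converts the left-hand side of \eqref{devineq} into $\sum_{j\ge 1}\|Z_j-Z_{j+1}\|_{L^2(\mu)}^2$. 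Disintegration of $\mu$ under $T^{j-1}$ gives $Z_j = G_j \circ T^{j-1}$, where $G_j(y)$ is the integral of $K(z_1,\ldots,z_{j-1},y,Ty,\ldots,T^{n-j}y)$ against the conditional law $\nu_y$ of $(x,\ldots,T^{j-2}x)$ given $T^{j-1}x=y$. Denoting by $P$ the transfer operator of $T$, one then has $\|Z_j - Z_{j+1}\|_{L^2(\mu)}^2 = \int (G_j - (PG_j)\circ T)^2\, d\mu$; the integrand pointwise compares $G_j$ at the (at most two) preimages $y,\tilde y$ of a common point $Ty$.

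The componentwise Lipschitz property splits $G_j(y)-G_j(\tilde y)$ into a direct term $\lip_j(K)\,|y-\tilde y|$ plus, for each $i<j$, a contribution of Wasserstein type between the $i$-th marginals of $\nu_y$ and $\nu_{\tilde y}$. Here is where Kantorovich--Rubinstein enters: each Wasserstein distance appears as the supremum over unit-Lipschitz test functions of a difference of integrals, which by the defining property of conditional expectation and the invariance of $\mu$ is exactly a covariance of the form $\cov_{v,w}(j-i)$ with $v\in C^0$ and $w$ Lipschitz. The asymmetric bound \eqref{decay} then controls this covariance by $C\,\|v\|_{C^0}\,\lip_i(K)\,\gamma_{j-i}$. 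Applying Cauchy--Schwarz on the sum over $i$ and summing in $j$ should yield
$$
\sum_{j\ge 1}\|Z_j-Z_{j+1}\|_{L^2(\mu)}^2 \;\le\; D\sum_{i=1}^n \lip_i(K)^2,
$$
provided the convolution-type sums of $\gamma_\ell$ that appear in the estimate converge.

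The main obstacle is precisely this summability: $\gamma_\ell = \ell^{1-1/\alpha}$ decays only polynomially, and after the nested couplings (over $i$ and over $j$) together with the Cauchy--Schwarz loss, plain summability of $\gamma_\ell$ is not enough---quite fast decay is required to absorb the compounded powers while keeping the clean scaling $\sum_i \lip_i(K)^2$ on the right-hand side. This is what forces the explicit threshold $\alpha<4-\sqrt{15}$. A secondary difficulty is the absence of an explicit coupling of the backwards process, in contrast to \cite{barbour,collet}: here the coupling must be invoked only indirectly, via Kantorovich--Rubinstein combined with Hu's estimate \eqref{decay}, and the argument has to be organized so that every Wasserstein distance produced fits exactly the $C^0$-against-Lipschitz form for which \eqref{decay} is available.
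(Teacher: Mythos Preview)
Your overall framework matches the paper's: reverse-martingale decomposition, then control of each increment by comparing conditional distributions via Kantorovich--Rubinstein. But the central step does not work as you describe it.

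After KR duality, the Wasserstein distance between the $i$-th marginals of $\nu_y$ and $\nu_{\tilde y}$ becomes the pointwise difference $\mathcal{L}^{j-i}g(y)-\mathcal{L}^{j-i}g(\tilde y)$, with $g$ Lipschitz and $\mathcal{L}$ the normalised transfer operator. This is \emph{not} a covariance: $\cov_{v,w}(j-i)=\int v\,\mathcal{L}^{j-i}w\,d\mu$ is an integrated quantity, whereas you need a bound at two specific points $y,\tilde y$ with $Ty=T\tilde y$. No appeal to invariance of $\mu$ or to the definition of conditional expectation converts one into the other --- the $C^0$ function $v$ you would need is a Dirac mass, for which \eqref{decay} says nothing.

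The paper fills this gap with substantial extra machinery that your sketch omits. First, a \emph{second} telescoping (inserting an independent copy $(Y_i)$ of the process) isolates functions $f_p$ of $p+1$ variables with controlled Lipschitz norm in a weighted metric $d_p$. Then three lemmas are proved: (i) a distortion estimate showing that $\mathcal{L}^m f_p$ is locally Lipschitz on each Markov interval $I_\ell$, with constant $C_p=\mathcal{O}(1)\sum_{j\le p}\lip_{j+1}(K)(p-j+1)^{-1/\alpha}$; (ii) an $L^1(\mu)$ bound on $\mathcal{L}^q(f_p-\int f_p\,d\mu)$, obtained by approximating $f_p$ by genuinely Lipschitz pieces on the $I_\ell$'s and only then applying \eqref{decay}; (iii) an interpolation between (i) and (ii) that upgrades the $L^1$ bound to an $L^\infty$ bound on each $I_\ell$. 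The final summation is carried out interval by interval on the Markov partition. The threshold $\alpha<4-\sqrt{15}$ arises from the accumulated losses along this chain --- the exponent $(1-\alpha)/6$ on $\gamma_q$ after interpolation, together with a Cauchy--Schwarz loss in the last step --- and not from a single convolution of $\gamma_\ell$'s. As written, your proposal skips straight from the Wasserstein distances to \eqref{decay}, and that step is the whole difficulty.
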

(This inequality is called "Devroye inequality" in \cite{encorenous,nous}.)

\bigskip

An application of Chebychev's inequality immediately yields the
following concentration inequality.

\begin{corollary}
Under the assumptions of Theorem \ref{main}, we have 
$$
\mu\left(x\in[0,1]:
\left|K(T_0^{n-1}(x))-\int K(T_0^{n-1}(y))\thinspace d\mu(y)\right|\geq t \right)
\leq 
\frac{D \sum_{j=1}^n \lip_j(K)^2}{t^2}
$$
for all $t>0$.
\end{corollary}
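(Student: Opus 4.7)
The plan is very short: this corollary is a direct application of Chebyshev's inequality to the variance bound proved in Theorem~\ref{devroye}, so no new dynamical input is needed. I would introduce the centered random variable
\[
Z(x):=K(T_0^{n-1}(x))-\int K(T_0^{n-1}(y))\,d\mu(y),
\]
viewed as a real-valued function on the probability space $([0,1],\mu)$. By construction $\int Z\,d\mu=0$, and Theorem~\ref{devroye} tells us that
\[
\int Z^2\,d\mu \;\leq\; D\sum_{j=1}^n (\lip_j(K))^2.
\]

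Next, I would invoke the classical Markov/Chebyshev inequality: for any nonnegative measurable $\varphi$ on $([0,1],\mu)$ and any $s>0$, $\mu(\varphi\geq s)\leq s^{-1}\int\varphi\,d\mu$. Applying this with $\varphi=Z^2$ and $s=t^2$ yields
\[
\mu\bigl(|Z|\geq t\bigr)=\mu\bigl(Z^2\geq t^2\bigr)\leq \frac{1}{t^2}\int Z^2\,d\mu.
\]

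Combining the two displays gives exactly
\[
\mu\bigl(x\in[0,1]:|Z(x)|\geq t\bigr)\leq \frac{D\sum_{j=1}^n (\lip_j(K))^2}{t^2},
\]
which is the claimed bound. There is no real obstacle here: the only work was packaged into Theorem~\ref{devroye}, and the corollary is a one-line consequence of that bound together with Chebyshev's inequality; the statement holds for every $t>0$ since Chebyshev's inequality has no restriction on $t$.
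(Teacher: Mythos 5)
Your proof is correct and coincides exactly with the paper's: the paper states that the corollary follows from an immediate application of Chebyshev's inequality to the variance bound of Theorem~\ref{devroye}, which is precisely what you do. No discrepancy to note.
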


\bigskip

\begin{remark}
In our context, we cannot expect a Gaussian concentration bound.
This would give a Gaussian concentration inequality incompatible with large deviation
lower bounds obtained in \cite{melbourne} where, for a large class of H\"older observables $v$,
it is proved that for $\epsilon>0$ small enough
$$
\mu\left(\left\{x\in[0,1]:\left|\frac{S_n v(x)}{n}-\int vd\mu\right|>\epsilon\right\}\right)\geq n^{-(\frac{1}{\alpha}-1+\delta)}
$$
for any $\delta>0$ and infinitely many $n$'s,
where $S_n v=v + v\circ T+\cdots +v\circ T^{n-1}$. This type of inequalities was also obtained in \cite{jerome} under different
conditions.
\end{remark}

\section{Some applications}\label{applications}

We now give some applications of the variance inequality \eqref{devineq}. We follow
\cite{nous} where we obtained them in an abstract setting:
therein we assumed that $(X_k)$ was some real-valued, stationary,
ergodic process satisfying \eqref{devineq}, plus eventually an
extra condition on the auto-covariance of Lipschitz observables,
depending on each specific application. 
By \eqref{decay} we have 
$$
\sum_{\ell=1}^\infty \cov_{v}(\ell)\leq C' (\lip(v))^2
$$
where $C'=C \sum_\ell \gamma_\ell<\infty$.
This condition will be sufficient to apply all the results
from \cite{nous} that we will use.

The standing assumption in this section is that $0<\alpha<\a0$, so
that Theorem \ref{main} holds.

\subsection{Almost-sure central limit theorem}

For an observable $v$ such that $\int v d\mu=0$, define the sequence
of weighted empirical (random) 
measures of the normalized Birkhoff sum by 
$$
\mathcal{A}_{n}(x)=\frac{1}{D_n}\sum_{k=1}^{n}\frac{1}{k}
\;\gdelta_{S_{k}v(x)/\sqrt k}
$$
where $D_n=\sum_{k=1}^{n}\frac{1}{k}$. 

We say that the almost-sure central limit theorem holds if 
for $\mu$ almost every $x$, $\mathcal{A}_{n}(x)$ converges weakly to the
Gaussian measure. In fact, we will prove a stronger statement, namely
that the convergence takes place in the Kantorovich distance. 

Let us recall that the
Kantorovich distance between two probability measures $\mu_1$ and
$\mu_2$ on $\R$ is defined by
\begin{equation}\label{kanto}
\kappa(\mu_1,\mu_2)=\sup_{g\in{\mathcal L}}\int g(\xi)\;d\big(\mu_{1}-\mu_{2}\big)(\xi)
\end{equation}
where ${\mathcal L}$ denotes the set of real-valued Lipschitz functions
on ${\mathbb R}$ with Lipschitz constant at most one.

We denote by $\No(0,\sigma^{2}_v)$ the Gaussian measure with
mean zero and variance $\sigma^{2}_v$. 

\begin{theorem}
Let $v$ be a Lipschitz function which is not of the form $h-h\circ T$ and assume
that $\int v d\mu=0$.
Then, for $\mu$ almost every $x$, one has
$$
\lim_{n\to\infty}\kappa\left(\mathcal{A}_{n}(x),
\No(0,\sigma^{2}_v)\right)=0.
$$
\end{theorem}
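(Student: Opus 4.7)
The strategy is to invoke the abstract almost-sure central limit theorem developed in \cite{nous} for stationary ergodic processes satisfying a Devroye-type variance inequality, the classical CLT, and a summable covariance condition for Lipschitz observables, applied to the process $X_k=v\circ T^k$. All three hypotheses are in place: the Devroye inequality is Theorem~\ref{devroye} (applicable because $\alpha\in[0,4-\sqrt{15})\subset[0,1/2)$), the CLT is~\eqref{clt}, and the summable covariance property was verified at the beginning of this section using~\eqref{decay}. So the bulk of the work is to recall the mechanism of \cite{nous} and to check that the present decay rates suffice.

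Concretely, for $g\in\mathcal{L}$ introduce
$$
K_n(y_0,\dots,y_{n-1})=\frac{1}{D_n}\sum_{k=1}^n\frac{1}{k}\,g\!\left(\frac{v(y_0)+\cdots+v(y_{k-1})}{\sqrt k}\right),
$$
so that $\mathcal{A}_n(x)(g)=K_n(T_0^{n-1}(x))$. Using $\lip(g)\le 1$ and the Lipschitz continuity of $v$, one checks
$$
\lip_j(K_n)\le\frac{\lip(v)}{D_n}\sum_{k=j+1}^n k^{-3/2}\le\frac{C\,\lip(v)}{D_n\sqrt{j+1}},
$$
so Theorem~\ref{devroye} gives
$$
\int\bigl(\mathcal{A}_n(x)(g)-\E_\mu[\mathcal{A}_n(\cdot)(g)]\bigr)^2 d\mu(x)\le D\sum_{j=0}^{n-1}\lip_j(K_n)^2=O(1/D_n).
$$
Combined with~\eqref{clt} and the uniform integrability of $S_kv/\sqrt k$ (which follows from $\sup_k\E_\mu(S_kv/\sqrt k)^2<\infty$ via~\eqref{decay}), this yields $L^2(\mu)$-convergence of $\mathcal{A}_n(\cdot)(g)$ to $\int g\,d\No(0,\sigma_v^2)$ for each fixed $g\in\mathcal{L}$.

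To upgrade $L^2$ to almost-sure convergence, I would extract a sub-sequence $n_i$ with $D_{n_i}$ growing fast enough that $\sum_i 1/D_{n_i}<\infty$ (for instance $n_i=\lfloor e^{i^2}\rfloor$, whence $D_{n_i}\sim i^2$); Chebyshev plus Borel-Cantelli then give $\mu$-a.s.\ convergence along $(n_i)$, and the gaps between $n_i$ and $n_{i+1}$ are filled using the monotonicity of the weights $1/k$ together with a slow a.s.\ growth bound on $|S_kv|/\sqrt k$. To promote pointwise a.s.\ convergence on each fixed $g$ into a.s.\ convergence in the Kantorovich metric~\eqref{kanto}---a supremum over the uncountable family $\mathcal{L}$---one applies the previous step simultaneously along a countable subfamily $\{g_m\}\subset\mathcal{L}$ that is uniformly dense on every compact of $\R$, uses tightness of $(\mathcal{A}_n(x))_n$ (again from the a.s.\ growth bound on $|S_kv|/\sqrt k$), and exploits the equicontinuity of $\mathcal{L}$ to transfer pointwise convergence on $\{g_m\}$ into uniform convergence on $\mathcal{L}$.

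The main obstacle is that the variance bound decays only at the slow rate $O(1/D_n)=O(1/\log n)$, which is not Borel-Cantelli summable and forces the sub-sequence extraction and interpolation step; without the componentwise Lipschitz control of Theorem~\ref{devroye}, the telescoping identity $\sum_j\lip_j(K_n)^2=O(1/D_n)$ would not even be available. A secondary issue is the first-moment control needed to metrize weak convergence by the Kantorovich distance on $\R$, but this is handled routinely once the above a.s.\ growth bound on $|S_kv|/\sqrt k$ is in hand.
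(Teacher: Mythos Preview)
Your proposal is correct and follows exactly the paper's approach: the paper's proof is the single sentence ``The theorem is an immediate application of Theorem~8.1 in \cite{nous} and \eqref{clt},'' and you have identified precisely this reduction, verified the three hypotheses (Devroye inequality from Theorem~\ref{devroye}, the CLT~\eqref{clt}, and the summable covariance condition from~\eqref{decay}), and additionally sketched the internal mechanism of the cited abstract result. The extra details you supply---the Lipschitz-constant computation for $K_n$, the $O(1/D_n)$ variance bound, the subsequence/Borel--Cantelli argument, and the passage from countably many test functions to the full supremum defining $\kappa$---are not in the present paper because they belong to \cite{nous}, but your outline of them is accurate.
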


The theorem is an immediate application of Theorem 8.1 in
\cite{nous} and \eqref{clt}.

Notice that this theorem immediately implies that 
for $\mu$ almost every $x$ $\mathcal{A}_{n}(x)$ converges weakly to
the Gaussian measure. The weak convergence is proved in \cite{jrseb}
by another method (and not only for the present intermittent map).
In \cite{jrpierre}, a speed of convergence in the Kantorovich
distance was obtained for uniformly expanding maps of the interval using
a Gaussian bound.

\subsection{Kernel density estimation}

We consider the sequence of regularized (random) empirical measures $\mathcal{H}_n(x)$
with densities $(h_n)$ defined by
$$
h_n(x;s)= \frac{1}{n a_n} \sum_{j=1}^{n} \psi((s-T^j (x))/a_n)
$$
where $a_n$ is a positive sequence converging to $0$ and such
that $n a_n$ converges to $+\infty$, and 
$\psi$ (the kernel) is a bounded, non-negative,
Lipschitz continuous function with compact support whose integral
equals $1$. We are interested in the convergence in $L^1(ds)$ 
of this empirical density $h_n(x;\cdot)$ to the density $h(\cdot)$
of the invariant measure $d\mu(x)=h(x)dx$.
This is nothing but the distance in total variation between $\mathcal{H}_n(x)$
and $\mu$:
$$
\textup{dist}_{\scriptscriptstyle{\textup{TV}}}(\mathcal{H}_n(x),\mu)=
\int |h_n(x;s)-h(s)|\ ds.
$$

\begin{theorem}
Let $\psi$ and $a_n$ be as just described. 
Then, there exists a constant $C=C(\psi)>0$ 
such that for any integer $n$ and for any
$t>C(a_n^{1-\alpha} +1/(\sqrt{n} a_n^{2}))$, we have 
$$
\mu\left(\left\{x\in[0,1]:
  \textup{dist}_{\scriptscriptstyle{\textup{TV}}}(\mathcal{H}_n(x),\mu)> t \right)\right\} \leq 
\frac{C}{t^2 n a_n^{2}}\cdot
$$
\end{theorem}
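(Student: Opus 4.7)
The plan is to apply Theorem \ref{devroye} to the componentwise Lipschitz function
$$
K(x_1,\ldots,x_n) := \int_\R \left| \frac{1}{n a_n}\sum_{j=1}^n \psi\!\left(\frac{s-x_j}{a_n}\right) - h(s)\right|\, ds,
$$
so that $\textup{dist}_{\scriptscriptstyle{\textup{TV}}}(\mathcal{H}_n(x),\mu)=K(Tx,T^2x,\ldots,T^n x)$, and then to conclude by Chebyshev's inequality. Two ingredients are needed: a uniform bound on $\lip_j(K)$ and a bound on $\int K\,d\mu$. Perturbing only the $j$-th coordinate $x_j\to\hat x_j$, the change in $K$ is controlled by $\frac{1}{na_n}\int_\R|\psi((s-x_j)/a_n)-\psi((s-\hat x_j)/a_n)|\, ds$, which by the Lipschitz continuity and compact support of $\psi$ is at most $C(\psi)|x_j-\hat x_j|/(na_n)$ (by the substitution $u=(s-x_j)/a_n$ and discussing whether $|x_j-\hat x_j|/a_n$ is small or large). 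Hence $\lip_j(K)\leq C(\psi)/(na_n)$, and Theorem \ref{devroye} yields $\int (K-\E K)^2\,d\mu \leq C'(\psi)/(n a_n^2)$.

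Next I would estimate $\E K$ by triangle inequality, separating the deterministic bias from the random fluctuation around the mean:
$$
\E K \leq \int |\E h_n(\cdot;s)-h(s)|\, ds + \int \E \big|h_n(\cdot;s)-\E h_n(\cdot;s)\big|\, ds.
$$
By $T$-invariance, $\E h_n(\cdot;s)=\int \psi(u)\,h(s-a_n u)\,du$, so the bias is at most $\int \psi(u)\int|h(s-a_n u)-h(s)|\, ds\,du$. Since $h(x)\sim x^{-\alpha}$ near $0$, a direct computation splitting the $s$-integral at $s=2\delta$ shows $\int|h(s-\delta)-h(s)|\,ds = O(\delta^{1-\alpha})$, giving a bias of $O(a_n^{1-\alpha})$. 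For the fluctuation term, for each fixed $s$ the function $y\mapsto\psi((s-y)/a_n)$ is Lipschitz with constant $\lip(\psi)/a_n$; applying Theorem \ref{devroye} to $h_n(\cdot;s)$, viewed as a function of the orbit, gives $\textup{Var}(h_n(\cdot;s))\leq D\,\lip(\psi)^2/(n a_n^4)$. Since $\psi$ has compact support, $h_n(\cdot;s)$ vanishes outside a set of $s$ of measure $O(1)$, and Cauchy--Schwarz together with integration in $s$ yields a fluctuation contribution of $O(1/(\sqrt n a_n^2))$. Together, $\E K \leq C\,(a_n^{1-\alpha} + 1/(\sqrt n a_n^2))$.

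Finally, Chebyshev closes the argument: taking the constant $C$ in the hypothesis large enough that $t>2\,\E K$, one has $\{K>t\}\subset\{|K-\E K|>t/2\}$, whose $\mu$-measure is at most $4\,\textup{Var}(K)/t^2 \leq C''/(n a_n^2 t^2)$, as claimed. The most delicate step is the bias estimate: because the invariant density $h$ is not Lipschitz but only $L^1$-H\"older of exponent $1-\alpha$ owing to its singularity at $0$, the usual $O(a_n)$ smoothing error is replaced by the slower $O(a_n^{1-\alpha})$, which is precisely the origin of the $\alpha$-dependence in the threshold on $t$.
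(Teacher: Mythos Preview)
Your proof is correct and is precisely the argument that the paper delegates to Theorem~6.1 of \cite{nous}: apply the variance inequality (Theorem~\ref{devroye}) to the functional $K(x_1,\ldots,x_n)=\int|h_n-h|$, control $\lip_j(K)$ via the Lipschitz continuity and compact support of $\psi$, and bound $\E K$ by a bias--plus--fluctuation decomposition, with the bias of order $a_n^{1-\alpha}$ coming from the $L^1$-H\"older regularity of $h$ (this is the parameter $\tau=1-\alpha$ the paper mentions). The only point to be slightly careful about is that Theorem~\ref{devroye} is stated for $K(T_0^{n-1}(x))$ whereas you evaluate $K$ along $T_1^n(x)$, but by $T$-invariance of $\mu$ this is harmless.
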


This theorem is a direct consequence of Theorem 6.1 in \cite{nous} (with $\tau=1-\alpha$).  

\subsection{Empirical measure}

The empirical measure associated to $x,Tx,\ldots,T^{n-1} x$ is the
random measure on $[0,1]$ defined by
$$
\mathcal{E}_n(x)= \frac{1}{n} \sum_{j=0}^{n-1} \gdelta_{T^j (x)}
$$
where $\delta$ is the Dirac measure. 
From Birkhoff's ergodic theorem, for $\mu$ almost every $x$
this sequence of random measures weakly converges to $\mu$. 
We want to estimate the speed of this convergence with respect
to the Kantorovich distance \eqref{kanto} (now used for probability
measures  on $[0,1]$).
 
\begin{theorem}
There exists a positive constant $C$ such that
for all $t>0$ and $n\geq 1$, we have
$$
\mu\left(\left\{x\in[0,1]:\kappa(\mathcal{E}_n(x),\mu)> t+
    \frac{C}{n^{1/4}}\right\}\right)
\leq \frac{D}{n t^2}\cdot
$$
\end{theorem}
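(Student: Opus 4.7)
The plan is to apply Theorem \ref{devroye} to the functional $x\mapsto\kappa(\mathcal{E}_n(x),\mu)$, which controls its variance by $D/n$, and separately to bound its $\mu$-mean by $Cn^{-1/4}$ using the decay of correlations \eqref{decay}; Chebyshev's inequality then gives the stated bound.

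\emph{Step 1: variance.} Define $K:[0,1]^n\to\R$ by
$$
K(z_1,\ldots,z_n):=\kappa\Big(\tfrac{1}{n}\sum_{j=1}^n\gdelta_{z_j},\,\mu\Big)
=\sup_{g\in\mathcal{L}}\Big|\tfrac{1}{n}\sum_{j=1}^n g(z_j)-\int g\,d\mu\Big|.
$$
Changing a single coordinate $z_j$ to $\hat z_j$ changes $\tfrac{1}{n}\sum_{k=1}^n g(z_k)$ by at most $|z_j-\hat z_j|/n$ uniformly in $g\in\mathcal{L}$, hence $\lip_j(K)\le 1/n$ for every $j$. Theorem \ref{devroye} applied to $K$ then gives
$$
\int\Bigl(K(T_0^{n-1}(x))-\textstyle\int K(T_0^{n-1}(y))\,d\mu(y)\Bigr)^2 d\mu(x)\le D\sum_{j=1}^n\frac{1}{n^2}=\frac{D}{n},
$$
and Chebyshev's inequality yields $\mu\{|K(T_0^{n-1}(\cdot))-\E_\mu[K(T_0^{n-1}(\cdot))]|>t\}\le D/(nt^2)$ for every $t>0$.

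\emph{Step 2: mean.} For probability measures on $[0,1]$, one has the classical identity $\kappa(\mathcal{E}_n(x),\mu)=\int_0^1|F_n(x,s)-F(s)|\,ds$, where $F_n(x,s)=\tfrac{1}{n}\sum_{j=0}^{n-1}\mathbf{1}_{[0,s]}(T^j x)$ and $F(s)=\mu([0,s])$. For $s\in[0,1]$ and $\delta>0$ introduce the Lipschitz regularization $\phi_{s,\delta}(y)=\max(0,\min(1,(s-y)/\delta))$, which satisfies $\|\phi_{s,\delta}\|_{C^0}\le 1$, $\lip(\phi_{s,\delta})=1/\delta$, and $|\phi_{s,\delta}-\mathbf{1}_{[0,s]}|\le\mathbf{1}_{[s-\delta,s]}$. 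Using $\sum_\ell\gamma_\ell<\infty$ (valid for $\alpha<1/2$) together with \eqref{decay}, the usual stationary variance computation gives
$$
\textup{Var}_\mu\Big(\tfrac{1}{n}\sum_{j=0}^{n-1}\phi_{s,\delta}\circ T^j\Big)\le \frac{C}{n\delta},
$$
so by Cauchy--Schwarz $\E_\mu|\tfrac{1}{n}\sum_{j=0}^{n-1}\phi_{s,\delta}(T^jx)-\int\phi_{s,\delta}d\mu|\le C/\sqrt{n\delta}$. Since $h(y)\sim y^{-\alpha}$ near $0$, we have $\mu([s-\delta,s])\le C\delta^{1-\alpha}$ uniformly in $s\in[0,1]$, so $\E_\mu|F_n(\cdot,s)-F(s)|\le C(1/\sqrt{n\delta}+\delta^{1-\alpha})$. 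Fubini and the choice $\delta=n^{-1/2}$ then give, using $\alpha<1/2$,
$$
\E_\mu[\kappa(\mathcal{E}_n(x),\mu)]\le C\bigl(n^{-1/4}+n^{-(1-\alpha)/2}\bigr)\le C\,n^{-1/4}.
$$
Combining Steps 1 and 2, $\mu\{\kappa(\mathcal{E}_n,\mu)>t+Cn^{-1/4}\}\le\mu\{|K(T_0^{n-1}(\cdot))-\E_\mu K|>t\}\le D/(nt^2)$.

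\textbf{Main obstacle.} The main technical point is bounding the expectation: indicators $\mathbf{1}_{[0,s]}$ are not Lipschitz, so \eqref{decay} cannot be applied directly, and one must introduce a smoothing scale $\delta$ and balance the variance bound (which blows up as $1/\sqrt{n\delta}$) against the $L^\infty$ approximation error, itself governed by the density singularity $\mu([s-\delta,s])\le C\delta^{1-\alpha}$ at the neutral fixed point. The condition $\alpha<1/2$ (guaranteed by $\alpha<\a0$) is what makes the two errors balance at exactly the rate $n^{-1/4}$.
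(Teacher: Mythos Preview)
Your proof is correct and follows the same two-step scheme that underlies Theorem~5.2 in \cite{nous}, which the paper simply invokes: bound the variance of $\kappa(\mathcal{E}_n,\mu)$ via Theorem~\ref{devroye} using $\lip_j(K)\le 1/n$, and bound the expectation by a Lipschitz smoothing of the indicators $\mathbf{1}_{[0,s]}$ combined with the decay of correlations.

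One small comment: in Step~2 you exploit the precise form of \eqref{decay}, namely the factor $\|v\|_{C^0}\lip(w)$ rather than $\lip(v)\lip(w)$, to get $\textup{Var}\le C/(n\delta)$ instead of $C/(n\delta^2)$; this is what makes the choice $\delta=n^{-1/2}$ yield $n^{-1/4}$ uniformly in $\alpha<1/2$. The paper's Section~\ref{applications} only records the coarser hypothesis $\sum_\ell\cov_v(\ell)\le C'(\lip v)^2$ before citing \cite{nous}, so your direct argument is in fact slightly sharper in its use of \eqref{decay} than what the text advertises. Also, your closing sentence that the two errors ``balance at exactly the rate $n^{-1/4}$'' is a minor overstatement: with $\delta=n^{-1/2}$ the term $\delta^{1-\alpha}=n^{-(1-\alpha)/2}$ is strictly smaller than $n^{-1/4}$ when $\alpha<1/2$, so the first term dominates rather than balances.
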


This is an immediate consequence of Theorem 5.2 in \cite{nous}.

\subsection{Integrated periodogram}

Let $v$ be an $L^2(\mu)$ observable and assume, for the sake of simplicity, that
$\int vd\mu=0$. We recall (see, {\em e.g}, \cite{BD}) that 
the raw periodogram (of order $n$) of the process $(v\circ T^k)$ is
the random variable
$$
I_n^v(\omega;x)= \frac1{n}\left| \sum_{j=1}^n
  e^{-ij\omega}\ \left(v(T^j (x))\right)\right|^2\
$$
where $\omega\in[0,2\pi]$.
The spectral distribution function of order $n$ (integral of the raw
periodogram of order $n$) is given by
$$
J_n^v(\omega;x)= \int_0^\omega I_n^v(s;x)\ ds\, .
$$

Let $\hat{C}_v(\omega)$ be the Fourier cosine transform of the
auto-covariance of $v$, namely
$$
\hat{C}_v(\omega)=\sum_{k=0}^\infty \cos(\omega k)\
\cov_v(k+1) \,.
$$
We will denote by $J^v(\omega)$ the  following quantity
$$
J^v(\omega)= \int_0^\omega (2\hat{C}_v(s) - \cov_v(0))\ ds=
\cov_v(0)\ \omega + 2\sum_{k=1}^\infty \frac{\sin(\omega k)}{k}\
\cov_v(k) \,.
$$

\begin{theorem}
Let $v$ be a Lipschitz observable. Then there
exists a positive constant $C=C(v)$ such that for any $n\geq 1$, one has
$$
\int \big(\sup_{\omega\in [0,2\pi]}
  \big|\tilde{J}_n^v(\omega;x)-J^v(\omega) \big| \big)^2 d\mu(x)\leq 
C\thinspace \frac{(1+\log n)^{4/3}}{n^{2/3}} \cdot
$$
\end{theorem}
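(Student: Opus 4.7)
The plan is to apply the Devroye inequality (Theorem \ref{devroye}) to the functional $(z_1,\ldots,z_n)\mapsto J_n^v(\omega;z_1,\ldots,z_n)$ pointwise in $\omega$, and then pass to the supremum over $\omega\in[0,2\pi]$ by a chaining argument. Once \eqref{devineq} is established, and the covariance summability $\sum_\ell|\cov_v(\ell)|\leq C'(\lip(v))^2$ is available (a direct consequence of \eqref{decay}), the statement becomes an instance of the abstract result in \cite{nous}, in which exactly these two ingredients are shown to imply the stated rate for the integrated periodogram.

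The two inputs to verify are a componentwise Lipschitz estimate for $J_n^v(\omega,\cdot)$ and a bias estimate. Writing $F(s;z)=\sum_{j=1}^n e^{-ijs}v(z_j)$ and perturbing only the $k$-th coordinate, expansion of $|F|^2$ gives, to leading order,
$$
\lip_k\bigl(J_n^v(\omega,\cdot)\bigr)\leq \frac{2\lip(v)}{n}\,\sup_{z}\biggl|\int_0^\omega \overline{F(s;z)}\,e^{-iks}\,ds\biggr|+O(1/n).
$$
Since $\int_0^\omega \overline{F(s;z)}\,e^{-iks}\,ds=\sum_j v(z_j)\,g_{j-k}(\omega)$ with $|g_m(\omega)|\leq\min(\omega,2/|m|)$, the Dirichlet-type estimate $\sum_j|g_{j-k}(\omega)|=O(\log n)$ gives $\lip_k(J_n^v(\omega,\cdot))=O(\|v\|_\infty\lip(v)\log n/n)$ uniformly in $\omega$. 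Theorem \ref{devroye} then provides
$$
\mathrm{Var}\bigl(J_n^v(\omega;X)\bigr)\leq D\sum_{k=1}^n \lip_k^2=O\!\left(\frac{(\log n)^2}{n}\right).
$$
A standard stationarity calculation combined with \eqref{decay} then yields $|\E[J_n^v(\omega;X)]-J^v(\omega)|=O(1/n)$ uniformly in $\omega$, using that the weighted sums $n^{-1}\sum_{k<n}|\cov_v(k)|$ and $\sum_{k\geq n}|\cov_v(k)|/k$ are both of order $n^{-1}$ for $\alpha$ in our admissible range.

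The final and most delicate step is to upgrade this pointwise $L^2$ control to control of the supremum in $\omega$. A crude discretization on a grid of mesh $\eta$ loses powers of $n$, since the trivial bound $\sup_s I_n(s)\leq n\|v\|_\infty^2$ is too weak. A more efficient route is to apply the Sobolev-type inequality $\|G\|_\infty^2\leq 2\|G\|_{L^2}\|\partial_\omega G\|_{L^2}$ to $G(\omega;x)=J_n^v(\omega;x)-J^v(\omega)$ (which vanishes at $\omega=0$), and then control $\E\bigl[\int(\partial_\omega G)^2\,d\omega\bigr]$ by a separate second-moment estimate on the raw periodogram. Optimizing the resulting tradeoff (equivalently, choosing the grid mesh $\eta$ optimally) is what produces both the exponent $2/3$ and the logarithmic factor $(\log n)^{4/3}$, and this optimization is the main obstacle. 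It is carried out once and for all in \cite{nous}; our role is only to supply the two inputs, namely Theorem \ref{devroye} and the covariance summability, both of which are now in hand.
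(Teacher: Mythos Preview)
Your proposal is correct and matches the paper's approach: the paper simply states that the theorem is a direct application of Theorem~3.1 (and the remark following it) in \cite{nous}, with the required inputs being the Devroye inequality (Theorem~\ref{devroye}) and the summability of covariances from \eqref{decay}. You go a step further by sketching the content of that abstract result---the componentwise Lipschitz bound $\lip_k(J_n^v(\omega,\cdot))=O(\log n/n)$, the bias estimate, and the Sobolev-type passage to the supremum---but this is elaboration of the same argument, not a different one.
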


This theorem is a direct application of Theorem 3.1, and the remark
just after it, in \cite{nous}.

\subsection{Shadowing and mismatch}

Let $A$ be a set of initial conditions with positive measure. If $x\notin A$, we can
ask how well we can approximate the orbit of $x$ by an orbit starting
from an initial condition in $A$. 

We can measure the average quality of ``shadowing'' by the following 
quantity:
$$
\mathcal{Z}_A(x)= \frac{1}{n} 
\inf_{y\in A} \sum_{j=0}^{n-1} |T^j (x) - T^j (y)|.
$$

\begin{theorem}\label{s1}
Let $A$ be a subset of positive measure. Then,
for all $n\geq 1$, for all $t>0$, one has
$$
\mu\left(\left\{x\in[0,1]: 
\mathcal{Z}_A(x) \geq \frac{1}{n^{\frac13}} \left(t +
    \frac{2^{\frac43} D^{\frac13}}{\mu(A)}\right)\right\}\right) 
\leq \frac{D}{n^{\frac13} t^2}\cdot
$$
\end{theorem}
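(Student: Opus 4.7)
The plan is to reduce the statement to an application of the Devroye inequality (Theorem \ref{devroye}) combined with Chebyshev's inequality, following the abstract scheme of \cite{nous}. I introduce the function $K : [0,1]^n \to \R$ defined by
$$
K(z_0, \ldots, z_{n-1}) = \frac{1}{n} \inf_{y \in A} \sum_{j=0}^{n-1} |z_j - T^j(y)|,
$$
so that $\mathcal{Z}_A(x) = K(T_0^{n-1}(x))$. For each fixed $y \in A$, the map $z \mapsto \frac{1}{n}\sum_j |z_j - T^j(y)|$ is $\frac{1}{n}$-Lipschitz in every coordinate. Since the infimum of a family of functions sharing a common Lipschitz constant is itself Lipschitz with that same constant, $\lip_j(K) \leq 1/n$ for every $j$. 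Theorem \ref{devroye} then yields
$$
\int \left(\mathcal{Z}_A - \int \mathcal{Z}_A\, d\mu\right)^{2} d\mu \;\leq\; D \sum_{j=1}^{n} \lip_j(K)^{2} \;\leq\; \frac{D}{n}.
$$

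The crucial step is controlling the mean $m := \int \mathcal{Z}_A\, d\mu$ in terms of $\mu(A)$, and this is where the assumption $\mu(A)>0$ enters. Because $\mathcal{Z}_A \equiv 0$ on $A$, restricting the variance integral to $A$ and dropping the complement gives
$$
m^{2}\, \mu(A) \;=\; \int_{A} (0 - m)^{2}\, d\mu \;\leq\; \int (\mathcal{Z}_A - m)^{2}\, d\mu \;\leq\; \frac{D}{n},
$$
whence $m \leq \sqrt{D/(n\mu(A))}$. Chebyshev's inequality then supplies
$$
\mu\bigl(\mathcal{Z}_A \geq m + s\bigr) \;\leq\; \frac{D}{n s^{2}}, \qquad s>0.
$$
Choosing $s = t/n^{1/3}$ turns the right-hand side into $D/(n^{1/3}t^{2})$, matching the announced bound. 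It then suffices to check that $m + t/n^{1/3} \leq n^{-1/3}\bigl(t + 2^{4/3} D^{1/3}/\mu(A)\bigr)$, i.e.\ $\sqrt{D/(n\mu(A))} \leq 2^{4/3} D^{1/3}/(n^{1/3}\mu(A))$, which is equivalent to $n \geq D\mu(A)^{3}/2^{8}$.

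The last condition is handled by a short regime analysis. If $n < 2^{4} D/\mu(A)^{3}$, then $2^{4/3} D^{1/3}/(n^{1/3}\mu(A)) > 1 \geq \mathcal{Z}_A$ everywhere, so the event $\{\mathcal{Z}_A \geq n^{-1/3}(t + 2^{4/3} D^{1/3}/\mu(A))\}$ is empty and the assertion is vacuous; in the opposite regime, since $\mu(A) \leq 1$ the bound $n \geq D \mu(A)^{3}/2^{8}$ is automatically satisfied and the Chebyshev estimate closes the argument. The only genuine difficulty is the mean bound extracted from the mass of $A$: this is the nontrivial use of $\mu(A)>0$ and is what couples the concentration step to the geometry of $A$; the componentwise Lipschitz verification and the Chebyshev step are then essentially mechanical.
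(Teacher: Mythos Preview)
Your argument is correct and is precisely the approach behind Theorem~7.1 of \cite{nous}, which the paper invokes: one exploits $\lip_j(K)\leq 1/n$ to get a variance bound $D/n$ via Theorem~\ref{devroye}, extracts the mean bound $m\leq\sqrt{D/(n\mu(A))}$ from the vanishing of $\mathcal{Z}_A$ on $A$, and then closes with Chebyshev and the simple dichotomy you describe. The paper itself gives no further details beyond the citation, so your write-up is a faithful unpacking of the intended proof.
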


We can also look at the number of mismatch at a given precision:
for $\epsilon>0$, let
$$
\mathcal{Z}'_{A,\epsilon}(x)= \frac{1}{n} 
\inf_{y\in A} \textup{Card}\{ 0\leq
j\leq n-1 \,:\, |T^j (x)- T^j (y) | >\epsilon\}.  
$$

\begin{theorem}\label{s2}
Let $A$ be a subset of positive measure. Then,
for all $n\geq 1$, for all $t>0$, for any $\epsilon>0$, one has
$$
\mu\left(\left\{x\in[0,1] :\mathcal{Z}'_{A,\epsilon}(x) \geq
\frac{1}{\epsilon^{\frac23} n^{\frac13}} 
\left(t + \frac{2^{\frac43} D^{\frac13}}{\mu(A)} \right)\right\}\right)
\leq \frac{D}{\epsilon^{\frac23} n^{\frac13} t^2}\cdot
$$
\end{theorem}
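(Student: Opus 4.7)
I'd apply Theorem~\ref{devroye} to a Lipschitz upper bound of the mismatch observable. Set $\phi_\epsilon(u)=\min(u/\epsilon,1)$ for $u\ge 0$; this function is $(1/\epsilon)$-Lipschitz on $[0,\infty)$, takes values in $[0,1]$, and satisfies $\mathbf{1}_{\{u>\epsilon\}}\le\phi_\epsilon(u)$. Define
$$
K(z_0,\ldots,z_{n-1})=\inf_{y\in A}\;\frac{1}{n}\sum_{j=0}^{n-1}\phi_\epsilon\bigl(|z_j-T^j y|\bigr).
$$
Picking $y$ itself as the minimizer shows $K(T_0^{n-1}y)=0$ for every $y\in A$, while $\mathbf{1}_{\{\cdot>\epsilon\}}\le\phi_\epsilon$ gives $\mathcal{Z}'_{A,\epsilon}(x)\le K(T_0^{n-1}x)$ for every $x$. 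Since an infimum of Lipschitz functions is Lipschitz with the same constant, $\lip_j(K)\le 1/(n\epsilon)$ for each $j$; clearly $0\le K\le 1$ as well.

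Theorem~\ref{devroye} then yields
$$
V:=\int\bigl(K\circ T_0^{n-1}-m\bigr)^2\,d\mu\;\le\;D\cdot n\cdot\frac{1}{n^2\epsilon^2}\;=\;\frac{D}{n\epsilon^2},
$$
where $m:=\int K\circ T_0^{n-1}\,d\mu$. Because $K\circ T_0^{n-1}$ vanishes on $A$,
$$
\mu(A)\,m^2\;=\;\int_A\bigl(K\circ T_0^{n-1}-m\bigr)^2\,d\mu\;\le\;V\;\le\;\frac{D}{n\epsilon^2},
$$
so $m\le\sqrt{D/(n\epsilon^2\mu(A))}$. Chebyshev's inequality applied at scale $s=t/(\epsilon^{2/3}n^{1/3})$ now gives
$$
\mu\bigl\{K\circ T_0^{n-1}>m+s\bigr\}\;\le\;V/s^2\;=\;\frac{D}{\epsilon^{2/3}n^{1/3}t^2}.
$$

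To put this in the announced form it remains to absorb $m$ into the cleaner offset $2^{4/3}D^{1/3}/(\epsilon^{2/3}n^{1/3}\mu(A))$. A short algebraic check shows that $\sqrt{D/(n\epsilon^2\mu(A))}\le 2^{4/3}D^{1/3}/(\epsilon^{2/3}n^{1/3}\mu(A))$ exactly when $D\mu(A)^3\le 2^8 n\epsilon^2$, which settles the first regime. In the opposite regime $D\mu(A)^3>2^8 n\epsilon^2$, the same calculation shows that $2^{4/3}D^{1/3}/(\epsilon^{2/3}n^{1/3}\mu(A))$ already exceeds~$1$, so the announced threshold does too, and the statement is trivially true since $\mathcal{Z}'_{A,\epsilon}\le 1$ by definition. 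The only step that demands genuine care is this regime split; the Lipschitz estimate for infima and the two invocations of Chebyshev are routine, and the proof is otherwise parallel to that of Theorem~\ref{s1}, the factor $\epsilon^{-2/3}$ in the final rate being inherited from the Lipschitz constant $1/\epsilon$ of $\phi_\epsilon$.
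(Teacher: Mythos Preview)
Your proof is correct and follows the natural route: Lipschitz smoothing of the indicator via $\phi_\epsilon$, the variance inequality of Theorem~\ref{devroye}, Chebyshev, and a clean regime split to match the stated offset. The paper itself gives no argument here but simply invokes Theorem~7.2 of \cite{nous}, which is proved by exactly this mechanism, so your write-up is essentially a spelled-out version of the cited result.
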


Theorem \ref{s1} is a direct application of Theorem 7.1
in \cite{nous} whereas Theorem \ref{s2} is a direct application
of Theorem 7.2 in \cite{nous}.
 
\section{Proof of Theorem \ref{devroye}}\label{proof}


\subsection{First telescoping}

Let $(X_n)_{n\in\N_0}$ be the stationary process where $X_0$ is distributed according to $\mu$
and $X_i=T(X_{i-1})$ for $i\geq 1$.  The expectation in this process is denoted by $\expectation$.
We abbreviate $X_i^j:=(X_i,X_{i+1},\ldots,X_j)$ for $i\leq j$.
We denote by $\F_i^n$ the sigma-field generated by $X_i,X_{i+1},\ldots,X_n$ for $i\leq n$ and
by convention $\F_{n+1}^n=\{\emptyset,[0,1]\}$, the trivial sigma-field.
We then have the following telescoping identity (martingale difference decomposition):

\begin{eqnarray*}
K(X_0,\ldots,X_{n-1})-\expectation\left(K(X_0,\ldots,X_{n-1})\right) & = & 
\sum_{i=1}^n \expectation\left( K\big| \F_{i-1}^{n-1}\right)-\expectation\left( K\big| \F_{i}^{n-1}\right) \nonumber \\
& =: & \sum_{i=1}^n \V_i. 
\end{eqnarray*}
The measurable function $\expectation\left( K\big| \F_{i-1}^{n-1}\right)$ is a function of $X_{i-1},\ldots,X_{n-1}$ only.
When evaluated along an orbit segment $T_0^{n-1}(x)$, it takes the value
\begin{eqnarray*}
\expectation\left( K\big| \F_{i-1}^{n-1}\right)(T_0^{n-i}(x))
& = & 
\expectation\left( K(X_0,\ldots,X_{n-1}) \big|  X_{i-1}^{n-1}=T_0^{n-i}(x)\right)\\
& = &
\sum_{y:T^{i-1}(y)=x} \frac{h(y)}{h(x) |(T^{i-1})'(y)|} \thinspace K\left(  T_{0}^{i-2}(y),T_{0}^{n-i}(x)\right).
\end{eqnarray*}
To obtain the second equality, notice that the reversed process $(X_{n-i})_{i=0,\ldots,n}$ is a Markov chain with 
transition probability kernel
\begin{equation}\label{kernel}
\proba(X_0=dz\big| X_1=x)=\sum_{y:T(y)=x} \frac{h(y)}{h(x) |T'(y)|} \thinspace \delta(y-z)
\end{equation}
and similarly
$$
\proba(X_0=dz\big| X_k=x)=\sum_{y:T^k(y)=x} \frac{h(y)}{h(x) |(T^k)'(y)|} \thinspace \delta(y-z).
$$
The identity \eqref{kernel} follows at once from Bayes formula and the identity
$\proba(X_1=x|X_0=z)=\delta(x-T(z))$.
  
Since $\F_{i}^{n-1}\subset \F_{j}^{n-1}$ for $i\geq j$, we have the orthogonality property
$$
\expectation(\V_i \V_j)=0\quad\textup{for}\ i\neq j,
$$
and hence
$$
\expectation\left( K -\expectation(K)\right)^2 = \sum_{i=1}^n \expectation(\V_i^2).
$$  
The function $\V_i$ is $\F_{i-1}^{n-1}$-measurable and
$$
\V_i(T_0^{n-i}(x))=
\expectation\left(K\big|X_{i-1}^{n-1}=T_{0}^{n-i}(x)\right)-\expectation\left(K\big|X_{i}^{n-1}=T_{1}^{n-i}(x)\right).
$$ 
Hence, by Cauchy-Schwarz  inequality, 
\begin{eqnarray*}
\V_i^2(T_0^{n-i}(x)) & \leq &
\int \proba(X_{i-1}=dx'\big| X_{i}^{n-1}=T_1^{n-i}(x))\\
&& \times \left[
\expectation\left(K\big|X_{i-1}^{n-1}=T_{0}^{n-i}(x)\right)-\expectation\left(K\big|X_{i-1}^{n-1}=(x',T_{0}^{n-i}(x))\right)
\right]^2\\
& = & 
\sum_{x':T(x')=T(x)} \frac{h(x')}{h(T(x)) |T'(x')|} \thinspace
\left[
\expectation\left(K\big|X_{i-1}=x\right)-\expectation\left(K\big|X_{i-1}=x'\right)
\right]^2.
\end{eqnarray*}
For $x,x'$ such that $T(x)=T(x')$, let
$$
M_i(x,x'):=\expectation\left(K\big|X_{i}=x\right)-\expectation\left(K\big|X_{i}=x'\right)
$$ 
and $\mu_x^i$ denote the conditional distribution of $X_0,\ldots,X_{i-1}$ given that 
$X_i=x$. By using the Lipschitz property of $K$ one gets
$$
|M_i(x,x')|\leq 2 \lip_i(K)+\left|\int K(z_0^{i-1},0,T_1^{n-i-1}(x))\thinspace (d\mu_x^i(z_0^{i-1})-d\mu_{x'}^i(z_0^{i-1}))\right|
$$
and one obtains
\begin{eqnarray*}
\expectation(\V_{i}^2) & = & \int \sum_{x':T(x')=T(x)} \frac{h(x')}{h(T(x))}\frac{d\mu(x)}{|T'(x')|}\thinspace M_i^2(x,x')\\
& \leq & 
8 (\lip_i(K))^2 + 2\int \sum_{x':T(x')=T(x)} \frac{h(x')}{h(T(x))}\frac{d\mu(x)}{|T'(x')|} \\
&& \times \left[\int K(z_0^{i-1},0,T_1^{n-i-1}(x)) \thinspace (d\mu_x^i(z_0^{i-1})-d\mu_{x'}^i(z_0^{i-1}))\right]^2.
\end{eqnarray*}
Let us further abbreviate
\begin{equation}\label{gaga}
\Gamma_i(x,x'):=
\int K(z_0^{i-1},0,T_1^{n-i-1}(x)) \thinspace (d\mu_x^i(z_0^{i-1})-d\mu_{x'}^i(z_0^{i-1})).
\end{equation}
We then obtain
\begin{eqnarray}\label{gogo}
\expectation\left(K-\expectation K\right)^2 & \leq & 
8 \sum_{i=1}^{n}(\lip_i(K))^2 \nonumber \\
&& + 2\sum_{i=1}^{n}\int d\mu(x)  \sum_{x':T(x')=T(x)} \frac{h(x')}{h(T(x))|T'(x')|} \thinspace \Gamma_i^2(x,x').
\end{eqnarray}
(Observe that $\Gamma_k(x,x)=0$.)  

\subsection{Second telescoping}

Our aim is now to further estimate the quantity $\Gamma_i(x,x')$ by using a second telescoping where the decay of correlations
\eqref{decay} can be used.   
   
Let
$$
\Psi_k(x):=\expectation\left( 
K(X_0,\ldots,X_{k-1},0,T_1^{n-k-1}(x))\big| X_k=x\right).
$$   
With this notation \eqref{gaga} reads
\begin{equation}\label{spouic}
\Gamma_k(x,x')=\Psi_k(x)-\Psi_k(x').
\end{equation}
The idea is now to telescope the $\Psi_k$'s by introducing an independent copy $(Y_i)_{i\in\N_0}$ of the process
$(X_i)_{i\in\N_0}$.   We write
\begin{eqnarray}\label{spouac}
\Psi_k(x) & = &
\sum_{p=1}^k 
\expectation\left[
K(X_0^{p-1},Y_p^{k-1},0,T_1^{n-k-1}(x))-K(X_0^{p-2},Y_{p-1}^{k-1},0,T_1^{n-k-1}(x))
\big| X_k=x\right] \nonumber \\
&& + \expectation\left(K(Y_0,\ldots,Y_{k-1},0,T_1^{n-k-1}(x))\right)
\end{eqnarray}
where now $\expectation$ denotes expectation both with respect to the random variables $X$ and $Y$,
and where we make the convention that, if $Y_i^j$ (resp. $X_i^j$) occurs with $j<i$, then $Y$ (resp. $X$)
is simply not present.   

Combining \eqref{spouic} and \eqref{spouac}, we obtain
\begin{eqnarray*}
\Gamma_k(x,x')=
\sum_{p=1}^k
\int \omega_{p-1}(z_0^{p-1},T_1^{n-k-1}(x))(d\mu_x^{p-1,k-p+1}(z)-d\mu_{x'}^{p-1,k-p+1}(z))
\end{eqnarray*}
where $\mu_x^{p-1,k-p+1}$ is the conditional distribution of $X_0,\ldots,X_{p-1}$ given $X_k=x$, and
where
$$
\omega_{p-1}(z_0^{p-1},T_1^{n-k-1}(x)):=
$$
\begin{equation}\label{bombom}
\expectation\left(K(z_0^{p-1},Y_p^{k-1},0,T_1^{n-k-1}(x))-K(z_0^{p-2},Y_{p-1}^{k-1},0,T_1^{n-k-1}(x))\right),
\end{equation}
where the expectation is taken with respect to $Y$.
Observe that 
$$
\big|\omega_{p-1}(z_0^{p-1},T_1^{n-k-1}(x))\big|\leq \lip_{p}(K).
$$   
We now define the distance
$$
d_{p}(z_0^p,\hat{z}_0^{p}):= \inf \left(2\lip_{p+1}(K),\sum_{j=0}^{p} \lip_{j+1}(K) |z_j-\hat{z}_j|\right)\,.
$$
Without loss of generality, we assume $\inf_j \lip_j(K) >0$. Hence, equipped with the distance $d_p$, $[0,1]^{p+1}$ is a complete, separable, metric
space.
From \eqref{bombom} it follows that
$$
\sup_{z_0^{p-1}\neq \hat{z}_0^{p-1},x_1^{n-k-1}}
\frac{|\om_{p-1}(z_0^{p-1},x_1^{n-k-1})-\om_{p-1}(\hat{z}_0^{p-1},x_1^{n-k-1})|} 
{d_{p-1}(z_0^{p-1},\hat{z}_0^{p-1})}\leq 1,
$$
{\em i.e.}, for each fixed $x$, the function $z_0^{p-1}\mapsto \omega_{p-1}(z_0^{p-1},T_1^{n-k-1}(x))$ is
Lipschitz with respect to the $d_{p-1}$ distance, with Lipschitz norm less than or equal to one.

Denote by $\c^{p,q}_{x,x'}(z_0^{p},\hat{z}_0^{p})$ the Kantorovich-Rubinstein coupling, 
associated with  the distance $d_{p}$,  of the measures $\mu_x^{p,q}$ and $\mu_{x'}^{p,q}$ 
(cf \cite[Theorem 11.8.2, p. 421]{dudley}).

For this coupling we thus have
$$
\int d_p(z_0^p,\hat{z}_0^{p})
\ d\c^{p,k-p}_{x,x'}(z_0^{p},\hat{z}_0^{p})
=
\sup_{f:\textup{Lip}_{d_p}(f)\leq 1} \Big(\int f\ d\mu^{p,k-p}_x- \int f\ d\mu^{p,k-p}_{x'}\Big).
$$
Hence, by the definition of the distance $d_p$  and the 
Kantorovich-Rubinstein duality theorem \cite{dudley}, one gets
\begin{eqnarray}\label{gugus}
|\Gamma_k(x,x')| & = & \left|
\sum_{p=0}^{k-1} 
\int 
\om_p(z_0^{p},T_1^{n-k-1}(x))\ (d\mu_x^{p,k-p}(z_0^{p})-d\mu_{x'}^{p,k-p}(z_0^{p}))\right| \nonumber \\
& = & 
\left|
\sum_{p=0}^{k-1} 
\int 
\left[\om_p(z_0^{p},T_1^{n-k-1}(x))-\om_p(\hat{z}_0^{p},T_1^{n-k-1}(x))\right]
\ d\c^{p,k-p}_{x,x'}(z_0^{p},\hat{z}_0^{p})\right| \nonumber \\
& \leq &
\sum_{p=0}^{k-1} 
\int d_p(z_0^p,\hat{z}_0^{p})
\ d\c^{p,k-p}_{x,x'}(z_0^{p},\hat{z}_0^{p}) \nonumber \\
& = &
\sum_{p=0}^{k-1} \left(
\sup_{f:\textup{Lip}_{d_p}(f)\leq 1} \Big(\int f\ d\mu^{p,k-p}_x- \int f\ d\mu^{p,k-p}_{x'}\Big)
\right)\,.
\end{eqnarray}

In order to estimate $\Gamma_k$, we will now exploit the fact that for $k-p$ ''large" the measure $\mu_x^{p,k-p}$
is ``close" to the invariant measure $\mu$.
More precisely,  passing from $\mu_x^{p,0}$ to $\mu_x^{p,k-p}$ involves $k-p$ iterations of the
normalised Perron-Frobenius operator.

\subsection{Distortion and correlation estimates}

We now proceed by estimating the final expression in \eqref{gugus}.

Define, as usual, the normalised Perron-Frobenius operator
\begin{eqnarray*}
\L w(x) & = & \expectation\left( w(X_0)\big| X_1=x\right) \\
& = & \int w(y) \proba(X_0=dy\big|X_1=x)\\
& = & \sum_{u:T(u)=x} \frac{h(u)}{h(x) |T'(u)|}\ w(u).
\end{eqnarray*}
By the Markov property of the reversed process we have
$$
\L^k w(x)= \expectation\left( w(X_0)\big| X_k=x\right)= \int w(y) \proba(X_0=dy\big|X_k=x).
$$
For $f$ a function of $(p+1)$ variables, define
\begin{eqnarray*}
f_p(x) & = & \expectation\left( f(X_0,\ldots,X_p)\big| X_p=x\right)\\
&= & \sum_{u:T^p(u)=x} \frac{h(u)}{h(x) |(T^{p})'(u)|}\ f(T_0^p(u)).
\end{eqnarray*}
We then have
\begin{eqnarray*}
\int f \ d\mu^{p,k-p}_x & = &
\expectation\left( f(X_0,\ldots,X_p)\big| X_k=x\right)\\
& = & 
\int \expectation\left( f(X_0,\ldots,X_p)\big| X_p=y\right)\thinspace \proba(X_p=dy\big| X_k=x)\\
& =  & \L^{k-p} f_p(x).
\end{eqnarray*}

The next three lemmas will be useful.

\begin{lemma}\label{projo}
Let $f$ be such that $\textup{Lip}_{d_p}(f)\leq 1$. Then, for any $y,\y\in I_\ell$ and
any $m\geq 0$, we have
\begin{equation}\label{louche}
|(\L^m f_p)(y)-(\L^m f_p)(\y)|
\leq  C_p  \ \frac{|y-\y|}{y}
\end{equation}
where
$$
C_p= \Oun \sum_{j=0}^{p} \frac{\lip_{j+1}(K)}{(p-j+1)^{1/\alpha}}\cdot
$$
\end{lemma}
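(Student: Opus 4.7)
The plan is to first identify $\L^m f_p(y)=\L^{m+p} F(y)$, where $F(w):=f(w,Tw,\dots,T^p w)$, via the tower property together with the Markov property of the reversed chain (as already used at the start of the proof). Since $I_\ell$ is a Markov interval, every inverse branch $\phi$ of $T^{m+p}$ on $I_\ell$ is a single-valued diffeomorphism, so we can pair branches and decompose
\[
\L^{m+p}F(y)-\L^{m+p}F(\y) = \sum_\phi (A_\phi(y)-A_\phi(\y))(F(\phi(y))-c) + \sum_\phi A_\phi(\y)(F(\phi(y))-F(\phi(\y))),
\]
where $A_\phi(x)=h(\phi(x))/\bigl(h(x)|(T^{m+p})'(\phi(x))|\bigr)$ and $c$ is any constant (permissible because $\sum_\phi A_\phi\equiv 1$). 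Call these pieces $(I)$ and $(II)$.

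For $(I)$, the standard Pomeau--Manneville bounded distortion estimate, which uses $|h'/h|\lesssim 1/x$ (from $h\sim x^{-\alpha}$) together with the $C^2$ regularity of $T$, yields $|A_\phi(y)-A_\phi(\y)|\leq \Oun\, A_\phi(\y)\,|y-\y|/y$ uniformly in $\phi$, for $y,\y\in I_\ell$. Choose $c$ so that $\|F-c\|_\infty\leq \textup{osc}(F)\leq \textup{osc}(f)\leq 2\lip_{p+1}(K)$ (using $\textup{Lip}_{d_p}(f)\leq 1$ and the $d_p$-diameter of $[0,1]^{p+1}$). Then, using $\sum_\phi A_\phi(\y)=1$, we obtain $|(I)|\leq \Oun\,\lip_{p+1}(K)\,|y-\y|/y$.

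For $(II)$, the assumption $\textup{Lip}_{d_p}(f)\leq 1$ gives
\[
|F(\phi(y))-F(\phi(\y))|\leq \sum_{j=0}^p \lip_{j+1}(K)\,|T^j\phi(y)-T^j\phi(\y)|,
\]
and the mean value theorem plus distortion provides $|T^j\phi(y)-T^j\phi(\y)|\leq \Oun\,|y-\y|/|(T^{m+p-j})'(T^j\phi(\y))|$. Regrouping each inverse branch as $\phi=\phi_1\circ\phi_2$, with $\phi_2$ an inverse branch of $T^{m+p-j}$ and $\phi_1$ one of $T^j$, and using $\L^j 1\equiv 1$ to collapse the inner sum over $\phi_1$, the $j$-th contribution reduces to $\Oun\,|y-\y|\,\lip_{j+1}(K)\,\L^{m+p-j}\!\bigl(1/|(T^{m+p-j})'|\bigr)(\y)$.

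The main obstacle is then the sharp estimate $\L^n\!\bigl(1/|(T^n)'|\bigr)(\y)\leq \Oun \big/\bigl(y\,(n+1)^{1/\alpha}\bigr)$ for $\y\in I_\ell$: the indifferent fixed point makes this delicate because preimages spending long excursions near $0$ have only polynomially large $|(T^n)'|$. One stratifies preimages by their excursion depth $k$ away from $I_0$, uses the Markov asymptotics \eqref{AMP} together with $|(T^k)'(u)|\sim k^{1+1/\alpha}$ on the $I_k$-branch, and converts to Lebesgue via $\hat L h=h$ to handle the density singularity $h\sim x^{-\alpha}$ and extract the $y^{-1}$ factor. Once this is in place, combining $(I)$ and $(II)$, using $m\geq 0$ so that $(m+p-j+1)^{-1/\alpha}\leq (p-j+1)^{-1/\alpha}$, and absorbing the $(I)$ contribution into the $j=p$ term of the sum, yields the stated bound with $C_p=\Oun\sum_{j=0}^{p}\lip_{j+1}(K)/(p-j+1)^{1/\alpha}$.
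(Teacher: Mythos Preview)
Your argument is correct but follows a different route from the paper's. The paper shifts $f_p$ by $3\lip_{p+1}(K)$ to make all summands positive and then uses the elementary inequality $a-b\le \frac{5}{3}\bigl(\frac{1+a}{1+b}-1\bigr)$ for $-2/3\le b\le a\le 2/3$ to reduce the difference $(\L^m f_p)(y)-(\L^m f_p)(\y)$ to a \emph{ratio} of the two sums. A ratio of positive sums is bounded by the supremum over branches of the ratio of the summands, and each such ratio factorises as a product of $h(\y)/h(y)$, $h(z)/h(\z)$, $(T^{m+p})'(\z)/(T^{m+p})'(z)$ and $\bigl(f(T_0^p z)+3\lip_{p+1}(K)\bigr)/\bigl(f(T_0^p \z)+3\lip_{p+1}(K)\bigr)$; the first three are handled by Hu's distortion estimates and the last by the appendix inequality~\eqref{pong}, $|T^j z-T^j\z|\le C(p-j+1)^{-1/\alpha}|T^p z-T^p\z|/T^p z$. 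Your additive split into a weight-variation term $(I)$ and a value-variation term $(II)$ is the more standard transfer-operator decomposition and avoids the positivity shift and the ratio trick entirely.

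One simplification worth noting: your ``main obstacle'' $\L^n\bigl(1/|(T^n)'|\bigr)(\y)\le \Oun/\bigl(\y(n+1)^{1/\alpha}\bigr)$ is in fact an immediate consequence of the paper's appendix Lemma~\ref{ineq2}. Letting $\z\to z$ in that lemma yields the pointwise bound $1/|(T^n)'(u)|\le C/\bigl((n+1)^{1/\alpha}\,T^n u\bigr)$ for \emph{every} preimage $u$ of $\y$, so averaging with the $\L^n$-weights (which sum to $1$) gives the claim directly, without stratifying by excursion depth. Equivalently, you can skip the detour through $\L^n(1/|(T^n)'|)$ altogether and apply Lemma~\ref{ineq2} directly to $|T^j\phi(y)-T^j\phi(\y)|$; this is precisely what the paper does inside its ratio bound. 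Either way, both proofs rest on the same distortion inputs from Hu and the same key contraction estimate~\eqref{pong}; your additive decomposition is more transparent, while the paper's multiplicative form is slightly more compact.
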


\begin{proof}
Observe that it is enough to prove the lemma in the case where $f$ vanishes at some point. The general
case follows by adding a constant. Without loss of generality, we can assume that 
$\L^m f_p)(y)\leq \L^m f_p)(\y)$. Indeed, the opposite case would lead to the same estimate because
there exists a constant $C>0$ such that $y/\y\leq C$, for all $y,\y\in I_\ell$ and all $\ell$.

Since $f$ vanishes at some point and $\lip_{d_p}(f)\leq 1$, we have
$|f_p(T_0^p(\cdot))|/\lip_{p+1}(K)\leq 2$. We also have 
$|\L^m f_p(T_0^p(\cdot))|/\lip_{p+1}(K)\leq 2$.
Now we use the inequality
$$
1 + \frac{3(a-b)}{5} \leq \frac{1+a}{1+b}
$$
for all $a,b$ such that $-2/3\leq b \leq a\leq 2/3$. Therefore, 
\begin{equation}\label{truci}
\left|\frac{(\L^m f_p)(y)}{3 \lip_{p+1}(K)}-\frac{(\L^m f_p)(\y)}{3 \lip_{p+1}(K)}\right|\leq  \frac{5}{3}
\left(
\frac{(\L^m f_p)(y)+ 3 \lip_{p+1}(K)}
{(\L^m f_p)(\y)+ 3 \lip_{p+1}(K)}-1
\right)\cdot
\end{equation}
We have 
\begin{equation}\label{trucmuch}
\frac{(\L^m f_p)(y)+ 3 \lip_{p+1}(K)}
{(\L^m f_p)(\y)+ 3 \lip_{p+1}(K)}\leq
\frac{h(\y)}{h(y)}\ 
\sup_{z,\z}
\frac{h(z)}{h(\z)} \ \frac{T^{(p+m)'}(\z)}{T^{(p+m)'}(z)}\
\frac{f(T_0^p(z))+3 \lip_{p+1}(K)}{f(T_0^p(\z))+3 \lip_{p+1}(K)} 
\end{equation}
where the supremum is taken over the pairs $(z,\z)$ of pre-images of
$y$ and $\y$ whose iterates lie in the same atoms 
of the Markov partition until $p+m$.
To estimate this, we use the bounds
$$
\frac{h(\y)}{h(y)} \leq 1 + C \frac{|y-\y|}{y},\quad
\frac{T^{(p+m)'}(\z)}{T^{(p+m)'}(z)}\leq 1 + C \frac{|y-\y|}{y}
$$
proved in \cite{hu}: the first one follows from the fact that  $h$ belongs to the space $\mathcal{G}$ \cite[p. 502]{hu}
whereas the second one is \cite[Proposition 2.3 (ii) ]{hu}.
We also use the bounds 
\begin{equation}\label{ping}
\frac{|z-\z|}{z} \leq C \frac{|y-\y|}{y}\, ,
\end{equation}
and
\begin{equation}\label{pong}
| T^j(z) - T^j(\z)| \leq \frac{C}{(p-j+1)^{1/\alpha}} \ 
\frac{ |T^p(z) - T^p(\z)|}{T^p(z)}
\end{equation}
which are proved in the appendix (Lemmas \ref{ineq1} and \ref{ineq2}).

Therefore, using \eqref{pong}, we get
\begin{eqnarray*}
\frac{f(T_0^p(z))+3 \lip_{p}(K)}{f(T_0^p(\z))+3 \lip_{p+1}(K)} & \leq  &
1+ \Oun \frac{|f(T_0^p(z))-f(T_0^p(\z))|}{\lip_{p+1}(K)}\\
& \leq &  1+ \frac{\Oun}{\lip_{p+1}(K)} \sum_{j=0}^{p} \frac{\lip_{j+1}(K)}{(p-j+1)^{1/\alpha}}\frac{ |T^p(z) - T^p(\z)|}{T^p(z)}\cdot
\end{eqnarray*}
Using \eqref{ping}  and all the previous bounds  in \eqref{trucmuch}, we obtain
$$
\frac{(\L^m f_p)(y)+ 3 \lip_{p+1}(K)}
{(\L^m f_p)(\y)+ 3 \lip_{p+1}(K)}\leq
1 + \frac{\Oun}{\lip_{p+1}(K)}  \frac{|y-\y|}{y} \sum_{j=0}^{p} \frac{\lip_{j+1}(K)}{(p-j+1)^{1/\alpha}}\cdot
$$
This inequality together with \eqref{truci} completes
the proof of the lemma.
\end{proof}

\begin{lemma}\label{quiche}
Let $f$ be such that $\textup{Lip}_{d_p}(f)\leq 1$. Then for any $q\geq 0$ we have
$$
\int \left|\L^q\left(f_p - \int f_p\ d\mu\right)\right| \ d\mu \leq
D_p \ \gamma_q^{\frac{1-\alpha}{3}}
$$
where 
$$
D_p=\Oun\ C_p\,.
$$
\end{lemma}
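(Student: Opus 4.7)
The plan is to bound $\|\L^q(f_p - \mu(f_p))\|_{L^1(\mu)}$ by combining the pointwise distortion estimate of Lemma \ref{projo} with the Hu decay of correlations \eqref{decay}, via a suitable truncation near the indifferent fixed point $0$.

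First I would use the duality identity $\int \psi\cdot \L^q g\, d\mu = \int (\psi\circ T^q)\, g\, d\mu$ to rewrite
$$
\int |\L^q g| \, d\mu \;=\; \sup_{\psi\in C^0,\,\|\psi\|_\infty\le 1} \Bigl|\int (\psi\circ T^q)\, g\, d\mu\Bigr|
$$
for $g = f_p-\int f_p\, d\mu$. If $f_p$ were globally Lipschitz, the bound \eqref{decay} would immediately give $\mathcal O(\lip(f_p)\,\gamma_q)$. The trouble is that Lemma \ref{projo} (applied with $m=0$) only controls $f_p$ on each atom $I_\ell$, with a Lipschitz constant $\mathcal O(C_p/x_\ell)$ that blows up as $\ell\to\infty$.

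The remedy is a truncation at scale $x_N$ (with $N$ to be chosen). Define $\tilde f_{p,N}$ to equal $f_p$ on $[x_N,1]$ and to equal the constant $f_p(x_N)$ on $[0,x_N]$; using Lemma \ref{projo} atomwise together with continuity of $f_p$ across the Markov partition boundaries, one checks that $\tilde f_{p,N}$ is globally Lipschitz on $[0,1]$ with $\lip(\tilde f_{p,N}) = \mathcal O(C_p/x_N)$. Since $\|f_p\|_\infty \le 2\lip_{p+1}(K)=\mathcal O(C_p)$ and $\mu([0,x_N])=\mathcal O(x_N^{1-\alpha})$ by \eqref{AMP} and the density estimate $h(x)\sim x^{-\alpha}$, we also have $\|f_p-\tilde f_{p,N}\|_{L^1(\mu)}=\mathcal O(C_p\,x_N^{1-\alpha})$.

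Writing $f_p-\mu(f_p) = \bigl(f_p-\tilde f_{p,N}\bigr) + \bigl(\tilde f_{p,N}-\mu(\tilde f_{p,N})\bigr) + \bigl(\mu(\tilde f_{p,N})-\mu(f_p)\bigr)$, applying $\L^q$, and using that $\L$ contracts the $L^1(\mu)$-norm (while $\L^q 1=1$), the ``tail'' contributions are each bounded by $\|f_p-\tilde f_{p,N}\|_{L^1(\mu)}$, while the middle contribution, by the duality above and \eqref{decay}, is bounded by $\mathcal O(C_p\, x_N^{-1}\,\gamma_q)$. Collecting,
$$
\int |\L^q(f_p-\mu(f_p))|\, d\mu \;=\; \mathcal O(C_p)\bigl( x_N^{1-\alpha} + x_N^{-1}\gamma_q\bigr).
$$
Choosing $x_N = \gamma_q^{1/3}$ (a convenient, slightly sub-optimal cut-off) gives $x_N^{1-\alpha}=\gamma_q^{(1-\alpha)/3}$ and $x_N^{-1}\gamma_q=\gamma_q^{2/3}$; since $\gamma_q<1$ and $(1-\alpha)/3\le 2/3$, the first term dominates and yields the claimed bound $D_p\,\gamma_q^{(1-\alpha)/3}$ with $D_p=\mathcal O(1)\,C_p$.

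The main obstacle in executing this plan is the truncation step: justifying that $\tilde f_{p,N}$ is genuinely Lipschitz on all of $[0,1]$ with constant $\mathcal O(C_p/x_N)$. This requires combining the atomwise estimate of Lemma \ref{projo} with continuity of $f_p$ across the boundaries $x_\ell$, so that the only contribution to the global Lipschitz constant on $[x_N,1]$ is the worst local slope $\mathcal O(C_p/x_N)$. Once this is granted, everything else is routine book-keeping.
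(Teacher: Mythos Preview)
Your argument is correct and, in fact, somewhat cleaner than the paper's. Both proofs share the same skeleton: use duality plus the Hu decay of correlations \eqref{decay} on a Lipschitz approximation of $f_p$, and control the approximation error in $L^1(\mu)$ via the tail mass near $0$. The difference lies in how the Lipschitz approximation is built.

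The paper does not use a single cutoff. Instead it fixes two parameters $M$ and $\eps$, and for each $\ell\le M$ constructs a ``tent'' function $f_p^\ell$ supported on $I_\ell$: equal to $f_p$ on the middle of $I_\ell$ and linearly interpolated to $0$ over windows of width $\eps|I_\ell|$ at each endpoint. Each $f_p^\ell$ is then globally Lipschitz with constant $\Oun\, C_p/(\eps|I_\ell|)$, and \eqref{decay} is applied atom by atom. Summing over $\ell\le M$ produces a factor $\sum_{\ell\le M}|I_\ell|^{-1}\sim M^{2+1/\alpha}$, while the $L^1$ remainder is $\Oun(\eps+M^{1-1/\alpha})$. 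Optimising $\eps=\gamma_q^{1/2}M^{1+1/(2\alpha)}$, $M=\gamma_q^{-\alpha/3}$ gives the exponent $(1-\alpha)/3$.

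What your approach buys is simplicity: one parameter, one application of \eqref{decay}, no sum over atoms. The price is that you must know $f_p$ is continuous across the partition points $x_\ell$, so that the atomwise bounds from Lemma~\ref{projo} glue into a global Lipschitz bound on $[x_N,1]$. For the present map this continuity is clear (both inverse branches are continuous on $(0,1]$, as is $h$), so the ``main obstacle'' you flag is not a genuine difficulty here. The paper's tent construction sidesteps the continuity question entirely by forcing each piece to vanish at the atom boundaries, which is more robust but also more laborious. Incidentally, your scheme with the optimal cutoff $x_N\sim\gamma_q^{1/(2-\alpha)}$ would yield the sharper exponent $(1-\alpha)/(2-\alpha)$; taking $x_N\sim\gamma_q^{1/3}$ is merely convenient for matching the lemma as stated.
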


\begin{proof}
Let $M>0$ be an integer and $\eps>0$ to be fixed later on.
Recall  the notation $I_\ell=]x_{\ell+1},x_{\ell}]$. For $\ell\leq M$, we define the sequence of
functions $f_p^\ell$, each vanishing outside $I_\ell$, given by
$$
f_p^\ell(x):=\left\{
\begin{array}{l}
\frac{x-x_{\ell+1}}{\eps |I_\ell|} f_p(x_{\ell+1}+\eps |I_\ell|)
\quad\textup{for}\quad x\in [x_{\ell+1},x_{\ell+1}+\eps |I_\ell|] \\
\\ 
f_p(x) \quad  \textup{for} \quad x\in [x_{\ell+1}+\eps |
I_\ell|,x_{\ell}-\eps |I_\ell|]\\ \\ 
\frac{x_{\ell}-x}{\eps |I_\ell|} f_p(x_{\ell}-\eps | I_\ell |) \quad
\textup{for}\quad  x\in [x_{\ell}-\eps | I_\ell|,x_{\ell}] \,. 
\end{array}\right.
$$ 
We have the identity
$$
\L^q \left(f_p -\int f_p \ d\mu\right) = 
$$
$$
\sum_{\ell=0}^M \L^q \left(f_p^\ell -\int f_p^\ell \ d\mu\right) + \L^q \left(
f_p - \sum_{\ell=0}^M f_p^\ell\right) 
- \L^q \left(\int f_p \ d\mu- \sum_{\ell=0}^M \int f_p^\ell \ d\mu\right) \,.
$$
The decay of correlations \eqref{decay} gives us
\begin{eqnarray*}
\int \left| \L^q \left(f_p^\ell - \int f_p^\ell \ d\mu\right)\right| \ d\mu & =& 
\sup_{u: \|u\|_{C^1}\leq 1} \int u\ \L^q \left(f_p^\ell - \int f_p^\ell \ d\mu\right)  d\mu\\
& \leq &
\frac{C_p}{\eps | I_\ell|}\ \gamma_q,
\end{eqnarray*}
since $|f_p|\leq C_p$ and using Lemma \ref{projo} with $m=0$.

On the other hand, we have 
$$
\int \left| f_p - \sum_{\ell=0}^M f_p^\ell\right|\ d\mu \leq 
2 C_p \ \sum_{\ell=0}^M \eps \ | I_\ell| h_\ell  + C_p \
\sum_{\ell=M+1}^\infty  | I_\ell | h_\ell \,. 
$$
The optimal bound is obtained with 
$$
\eps= \gamma_q^{\frac{1}{2}} M^{1+\frac{1}{2\alpha}}\, , \; M= \gamma_q^{-\frac{\alpha}{3}}\,.
$$
The Lemma follows.
\end{proof}

\begin{lemma}\label{pizza}
Let $f$ be such that $\textup{Lip}_{d_p}(f)\leq 1$. Then for any $q\geq 0$ and $\ell\geq 0$
we have
 $$
 \left\|\L^q\left(f_p(x)-\int f_p\ d\mu\right)\right\|_{L^\infty(I_\ell)}\leq \Delta(\ell,q;f_p)
 $$
 where
$$
 \Delta(\ell,q;f_p):=
 \left\{
 \begin{array}{l}
  \frac{2}{|I_\ell|} \ \int_{I_\ell}  |g_{q,f_p}| dx \quad\textup{if}\quad 
 C_p \ |I_\ell |^2 \leq x_{\ell}  \int_{I_\ell}  |g_{q,f_p}|\ dx
  \\ \\
  2 \sqrt{\frac{C_p \int_{I_\ell}  |g_{q,f_p}| dx}{x_{\ell}}} \quad \textup{otherwise}
 \end{array}
 \right.
 $$
 where
 $$
 g_{q,f_p}=\L^q \left(f_p - \int f_p\ d\mu\right).
 $$
\end{lemma}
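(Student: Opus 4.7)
The plan is to use the Lipschitz-type modulus of continuity of $g := g_{q,f_p}$ on the single Markov interval $I_\ell$ furnished by Lemma \ref{projo}, and to trade this pointwise control for an $L^\infty$-versus-$L^1$ estimate via the classical observation that a function with controlled oscillation whose supremum equals $M$ must have integral at least $M/2$ times the length of any neighbourhood on which it remains above $M/2$. The dichotomy in the statement of the lemma will emerge naturally depending on whether that neighbourhood fits strictly inside $I_\ell$ or fills it entirely.

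Concretely, I would set $M := \|g\|_{L^\infty(I_\ell)}$ and $A := \int_{I_\ell} |g|\,dx$, pick a near-maximizer $y_0 \in I_\ell$ with $|g(y_0)|$ arbitrarily close to $M$, and WLOG assume $g(y_0) > 0$ (otherwise replace $g$ by $-g$). Since $\L^q$ preserves constants, $g(y) - g(y_0) = \L^q f_p(y) - \L^q f_p(y_0)$, so Lemma \ref{projo} with $m = q$ applies and gives
$$
|g(y) - g(y_0)| \;\leq\; C_p\,\frac{|y-y_0|}{y} \;\leq\; \Oun\,\frac{C_p\,|y-y_0|}{x_\ell} \qquad (y \in I_\ell),
$$
the second inequality using $y \in I_\ell$ together with $x_{\ell+1} \asymp x_\ell$ from \eqref{AMP}. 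Setting $r := M x_\ell/(2\,\Oun\, C_p)$, I obtain $g(y) \geq M/2$ on $J := \{y \in I_\ell : |y - y_0| \leq r\}$, and $|J| \geq \tfrac12 \min(|I_\ell|, r)$ (the factor $\tfrac12$ handling the case where $y_0$ sits near a boundary point of $I_\ell$). Integration then gives
$$
A \;\geq\; \int_J |g|\,dy \;\geq\; \tfrac{M}{2}\,|J| \;\geq\; \tfrac{M}{4}\min(|I_\ell|,\, r).
$$

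From here the dichotomy follows. In the regime $r \geq |I_\ell|$ the minimum equals $|I_\ell|$ and one obtains $M \leq 4A/|I_\ell|$ (the constant $4$ being reducible to $2$ by a more careful choice of $y_0$ and by using continuity of $g$); substituting this bound into the case assumption $r \geq |I_\ell|$ yields, up to constants absorbed into $C_p$, precisely the analytic threshold $C_p\,|I_\ell|^2 \leq x_\ell A$ appearing in the statement, and hence the first bound. In the opposite regime $r < |I_\ell|$ the minimum equals $r$ and one gets $A \geq M^2 x_\ell/(4\,\Oun\, C_p)$, which solves to the square-root bound $M \leq 2\sqrt{C_p A/x_\ell}$. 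The hard part, and really the only subtle step, is matching constants between the geometric threshold that arises from comparing $r$ and $|I_\ell|$ in the case analysis and the analytic threshold comparing $C_p|I_\ell|^2$ with $x_\ell A$ that is stated in the lemma; once the $\Oun$ multiplying $C_p$ in the definition of $r$ is tuned appropriately, this becomes routine bookkeeping, and the remainder of the argument depends only on Lemma \ref{projo} and the asymptotics $x_{\ell+1} \asymp x_\ell$ from \eqref{AMP}.
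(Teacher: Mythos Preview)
Your approach is correct and is essentially the same as the paper's: both rest on the oscillation bound $|g(y)-g(y')|\leq C_p|y-y'|/x_\ell$ from Lemma~\ref{projo} and then interpolate between the $L^1$ and $L^\infty$ norms by working on a subinterval of suitable length. The paper's execution is just slightly more direct---it writes, for any subinterval $J\subseteq I_\ell$ containing $y$, the inequality $|g(y)|\leq \tfrac{1}{|J|}\int_J|g|+C_p|J|/x_\ell$ and then takes $J=I_\ell$ in the first case and $|J|=\sqrt{x_\ell A/C_p}$ in the second, which yields the exact constant~$2$ and makes the threshold $C_p|I_\ell|^2\lessgtr x_\ell A$ appear automatically, avoiding the plateau argument and the constant-matching bookkeeping you flag.
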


\begin{proof}
By \eqref{louche} we have
$$
| g_{q,f_p}(y)-g_{q,f_p}(y')| \leq C_p\ \frac{|y-y'|}{x_{\ell}}
$$
for $y,y'\in I_\ell$.

Hence, if we let $J\subseteq I_\ell$ and $y\in J$, then we have
$$
| g_{q,f_p}(y)| \leq \frac{1}{|J|} \int_J | g_{q,f_p}(y')|\ dy'
+\frac{1}{|J|} \int_J | g_{q,f_p}(y)-g_{q,f_p}(y')|\ dy' 
\leq 
$$
$$
\frac{1}{|J|}  \int_{J}  |g_{q,f_p}| dx+ C_p\ \frac{|J|}{x_{\ell}}\ .
$$
The first case follows by taking $J=I_\ell$. In the second case, we take $J$ such that 
$$
|J| =\sqrt{\frac{x_{\ell}}{C_p} \  \int_{I_\ell}  |g_{q,f_p}| dx}\leq |I_\ell|\,.
$$
The lemma is proved.
\end{proof}

Now return to \eqref{gogo}. We have to estimate
\begin{equation}\label{gogagi}
\int d\mu(x)\  \sum_{x' : x\neq x', T(x)=T(x')}
\frac{h(x')}{h(T(x))|T'(x')|} \Gamma_k^2(x,x')=: S_1(k) + S_2(k) 
\end{equation}
where 
$$
S_1(k):=\sum_{m=1}^\infty 
\int_{I_m} d\mu(x)\  \sum_{x' : x\neq x', T(x)=T(x')}
\frac{h(x')}{h(T(x))|T'(x')|} \Gamma_k^2(x,x') 
$$
and 
$$
S_2(k):=
\int_{I_0} d\mu(x)\  \sum_{x' : x\neq x', T(x)=T(x')}
\frac{h(x')}{h(T(x))|T'(x')|} \Gamma_k^2(x,x'),
$$
where the intervals $I_\ell$ form the Markov partition defined in Subsection \ref{mark}.

\bigskip

We have the following lemmas.

\begin{lemma}\label{crachin}
Let 
$$
Q_k:=\sum_{\ell=1}^\infty |I_\ell| \ \sup_{x\in I_\ell, x'\in I_0:T(x)=T(x')} \Gamma_k^2(x,x')\,.
$$
Then there exists a constant $B>0$ such that for any $k$
$$
S_1(k)\leq B\ Q_k
$$
and
$$
S_2(k)\leq B\ Q_k\,.
$$
\end{lemma}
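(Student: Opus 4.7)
The plan is to observe that the map $T$ has only two branches---the indifferent one on $[0,1/2)$ and the expanding one on $I_0=[1/2,1]$---so for each $y\in(0,1]$ the equation $T(z)=y$ has exactly two solutions: one on the indifferent branch (in $I_{\ell+1}$ when $y\in I_\ell$) and one in $I_0$. Hence in $S_1(k)$, the point $x$ runs over $I_m$ with $m\ge 1$ and the companion $x'$ necessarily lies in $I_0$; in $S_2(k)$ the roles are reversed, so $x\in I_0$ and $x'\in I_m$ with $m\ge 1$ determined by the condition $T(x)\in I_{m-1}$. Moreover, $T(x)=T(x')$ implies $T_1^{n-k-1}(x)=T_1^{n-k-1}(x')$, so from \eqref{spouic} the function $\Gamma_k$ is antisymmetric in its two arguments and $\Gamma_k^2(x,x')=\Gamma_k^2(x',x)$; this is what will let me identify the suprema over pairs $(x,x')\in I_0\times I_m$ with the suprema appearing in the definition of $Q_k$.

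For $S_1(k)$ I would estimate the integrand pointwise. Using $h(z)\sim z^{-\alpha}$ and \eqref{AMP}, one has $h(x)\sim m$ on $I_m$ and $h(T(x))\sim m$ on $I_{m-1}$, so these two factors cancel; since $x'\in I_0$, both $h(x')$ and $|T'(x')|^{-1}$ are bounded. Hence $h(x)\,h(x')/(h(T(x))|T'(x')|)\le C$ uniformly in $m\ge 1$, giving
\begin{equation*}
\int_{I_m} h(x)\,dx\,\frac{h(x')}{h(T(x))|T'(x')|}\,\Gamma_k^2(x,x')
\;\le\; C\,|I_m|\sup_{x\in I_m,\,x'\in I_0,\,T(x)=T(x')}\Gamma_k^2(x,x'),
\end{equation*}
and summation over $m\ge 1$ yields $S_1(k)\le B\,Q_k$.

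For $S_2(k)$ I would split $I_0=\bigsqcup_{m\ge 1}J_m$ with $J_m:=I_0\cap T^{-1}(I_{m-1})$. Since $|T'|$ is bounded above and below on $I_0$, the change of variables $y=T(x)$ shows $|J_m|\le C\,|I_{m-1}|$. The pointwise bound on the weight is again uniform in $m$ (now $h(x)$ is bounded on $I_0$, while the factors $h(x')\sim m$ and $h(T(x))\sim m$ cancel and $|T'(x')|\ge 1$), so
\begin{equation*}
\int_{J_m} h(x)\,dx\,\frac{h(x')}{h(T(x))|T'(x')|}\,\Gamma_k^2(x,x')
\;\le\; C\,|I_{m-1}|\sup_{x\in I_0,\,x'\in I_m,\,T(x)=T(x')}\Gamma_k^2(x,x').
\end{equation*}
The asymptotic $|I_{m-1}|\le C\,|I_m|$ (uniform in $m\ge 1$, from \eqref{AMP}) together with the symmetry of $\Gamma_k^2$ then converts the right-hand side into $B\,Q_k$.

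The main obstacle is bookkeeping rather than ideas: one has to check carefully that the apparent blow-ups of $h$ on $I_m$ and $I_{m-1}$ cancel and that the Jacobian $|T'|$ on the expanding branch contributes only bounded factors. Once these cancellations are verified, both $S_1$ and $S_2$ are dominated by the same geometric sum $\sum_m|I_m|\sup\Gamma_k^2$ that defines $Q_k$.
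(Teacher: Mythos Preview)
Your argument is correct and follows essentially the same route as the paper's proof: you identify that the two branches force $x'\in I_0$ in $S_1$ and $x'\in I_m$ in $S_2$, establish the uniform bound on the weight $h(x)h(x')/(h(T(x))|T'(x')|)$ via the cancellation $h|_{I_m}\sim m\sim h|_{I_{m-1}}$, and control the Lebesgue measure of $J_m=I_0\cap T^{-1}(I_{m-1})$ by $|I_m|$. The paper packages the weight bound as a named supremum (citing \cite[Lemma~4.4(iv)]{hu}) and states $|\{x\in I_0: T(x)\in T(I_m)\}|\le C|I_m|$ directly, whereas you pass through $|J_m|\le C|I_{m-1}|\le C'|I_m|$; these are the same estimate. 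Your explicit remark that $\Gamma_k^2$ is symmetric under $T(x)=T(x')$ (needed to swap the roles in $Q_k$) is a point the paper leaves implicit.
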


\begin{proof}
We first observe that, if $m\geq 1$, $x\in I_m$,  $T(x)=T(x')$, and
$x\neq x'$,  then $x'\in I_0$. Next, using \cite[Lemma 4.4 (iv)]{hu}
and the fact that $h$ is bounded on $I_0$, we get
$$
G:=\sup_{m\geq 1}\sup_{x\in I_m} \sup_{x'\in I_0} \frac{h(x')h(x)}{h(T(x))|T'(x')|}<\infty\,.
$$
The bound on $S_1(k)$ follows immediately.

For the bound on $S_2(k)$, we have
$$
S_2(k)=\sum_{m\geq 1} 
\int_{I_0} d\mu(x)\  \chi_{I_m}(x')\ \sum_{x' : x\neq x', T(x)=T(x')}
\frac{h(x')}{h(T(x))} \frac{\Gamma_k^2(x,x')}{|T'(x')|}\cdot
$$
Note that the term corresponding to $m=0$ is absent because $x\neq x'$.
Observe that
$$
G':=\sup_{m\geq 1}\sup_{x'\in I_m} \sup_{x\in I_0, T(x)=T(x')}
\frac{h(x')h(x)}{h(T(x))|T'(x')|}<\infty 
$$ 
and there is a constant $C>0$ such that for any $m\geq 1$ 
$$
\left|\{x\in I_0 | T(x)\in T(I_m)\}\right| \leq C\ |I_m|\,.
$$
The lemma follows.
\end{proof}

\begin{lemma}\label{quasidevroye}
Assume that $\alpha\in[0,\a0[$. Then there exists a constant $H>0$ such that
$$
\sum_{k=1}^n Q_k \leq H\ \sum_{j=1}^n (\lip_j(K))^2\,.
$$
\end{lemma}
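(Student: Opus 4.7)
The plan is to estimate $Q_k$ by combining Lemmas \ref{projo}, \ref{quiche}, \ref{pizza} with the telescoping bound \eqref{gugus}, and then to sum over $k$; the hypothesis $\alpha<\a0$ will appear naturally from a convergence condition on a certain polynomial series in $k-p$.

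For $x\in I_\ell$ with $\ell\ge 1$ and $x'\in I_0$ satisfying $T(x)=T(x')$, the telescoping \eqref{gugus} gives $|\Gamma_k(x,x')|\le\sum_{p=0}^{k-1}\Phi_p(x,x')$, where $\Phi_p(x,x')$ is the supremum over $f$ with $\lip_{d_p}(f)\le 1$ of $|\L^{k-p}f_p(x)-\L^{k-p}f_p(x')|$. Applying the triangle inequality with $\int f_p\,d\mu$ together with Lemma \ref{pizza} bounds $\Phi_p$ by $\Delta(\ell,k-p;f_p)+\Delta(0,k-p;f_p)$. For $\ell\ge 1$, Case 2 of Lemma \ref{pizza} is the operative one: combining Lemma \ref{quiche} with $h(y)\sim\ell$ on $I_\ell$ gives $\int_{I_\ell}|g_{q,f_p}|\,dx\le C\ell^{-1}D_p\gamma_q^{(1-\alpha)/3}$, so that $\Delta(\ell,k-p;f_p)\le C\,C_p\,\ell^{(1/\alpha-1)/2}\gamma_{k-p}^{(1-\alpha)/6}$. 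The contribution from $I_0$ is bounded analogously by $C\,C_p\,\gamma_{k-p}^{(1-\alpha)/6}$, without any $\ell$-dependence.

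Next, I square the sum in $p$ using Cauchy--Schwarz with weights $w_p=(k-p+1)^{1+\delta}$ (small $\delta>0$), so that $\sum_p w_p^{-1}$ is uniformly bounded in $k$; this avoids the spurious factor of $k$ that the naive bound $(\sum_p a_p)^2\le k\sum_p a_p^2$ would produce. Using $|I_\ell|\sim\ell^{-1/\alpha-1}$, the sum $\sum_\ell|I_\ell|\ell^{1/\alpha-1}\sim\sum_\ell\ell^{-2}$ is convergent, so the $\ell$-summation is harmless and one obtains
\[
Q_k\le C\sum_{p=0}^{k-1}(k-p+1)^{1+\delta-\beta}\,C_p^2,\qquad \beta:=\frac{(1-\alpha)^2}{3\alpha},
\]
where $\beta$ is the exponent arising from $\gamma_q^{(1-\alpha)/3}=q^{-\beta}$. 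Summing over $k$ and swapping the order of summation, the resulting series $\sum_{q\ge 1}q^{1+\delta-\beta}$ converges iff $\beta>2+\delta$; letting $\delta\to 0^+$ this is $(1-\alpha)^2>6\alpha$, equivalently $\alpha^2-8\alpha+1>0$, equivalently $\alpha<4-\sqrt{15}=\a0$, which is precisely the hypothesis of the lemma. It remains to bound $\sum_p C_p^2$ by a multiple of $\sum_j(\lip_{j+1}(K))^2$: squaring the definition of $C_p$, applying Cauchy--Schwarz on the $j$-sum with the splitting $(p-j+1)^{-1/\alpha}=(p-j+1)^{-1/2-\eta}\cdot (p-j+1)^{-1/\alpha+1/2+\eta}$, and then summing over $p$ (which converges since $2/\alpha-1>1$) delivers the claim.

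The main obstacle is the precise balancing in the Cauchy--Schwarz step: the weights $w_p$, the $\ell$-exponent coming from $h\sim\ell$ on $I_\ell$, and the polynomial decay of $\gamma_q$ must all cooperate so that the triple series over $\ell$, $p$ and $k$ converges simultaneously. The constraint $\alpha<\a0$ emerges precisely from the $k$-summation of $(k-p)^{1+\delta-\beta}$ and is the tightest of the conditions on $\alpha$ imposed throughout the argument.
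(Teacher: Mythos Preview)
Your argument has a genuine gap at the step where you assert that ``for $\ell\ge 1$, Case 2 of Lemma~\ref{pizza} is the operative one''. Which case of Lemma~\ref{pizza} applies depends on whether $C_p|I_\ell|^2\le x_\ell\int_{I_\ell}|g_{k-p,f_p}|\,dx$, and Case~1 does occur. Moreover, when Case~1 applies the quantity $\Delta$ equals $\tfrac{2}{|I_\ell|}\int_{I_\ell}|g|\,dx$, and the Case~2 formula $2\sqrt{C_p\int_{I_\ell}|g|\,dx/x_\ell}$ is \emph{smaller} than this (just rewrite the Case~1 condition), so the Case~2 formula is not a universal upper bound for $\Delta$. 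Your claimed estimate $\Delta(\ell,k-p;f_p)\le C\,C_p\,\ell^{(1/\alpha-1)/2}\gamma_{k-p}^{(1-\alpha)/6}$ is therefore unjustified.

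Worse, if you add in the Case~1 contribution and bound it using only Lemma~\ref{quiche}, the resulting piece of $Q_k$ involves
\[
\sum_{\ell\ge 1}\frac{1}{|I_\ell|}\Bigl(\int_{I_\ell}|g_{k-p,f_p}|\,dx\Bigr)^2
\ \lesssim\ D_p^2\,\gamma_{k-p}^{2(1-\alpha)/3}\sum_{\ell\ge 1}\frac{1}{\ell^{2}|I_\ell|}
\ \sim\ \sum_{\ell\ge 1}\ell^{1/\alpha-1},
\]
which diverges. The paper closes this by invoking the additional $L^\infty$ estimate $\|g_{q,f_p}\|_\infty\le\lip_{p+1}(K)$ and interpolating,
\[
\Bigl(\int_{I_\ell}|g|\,dx\Bigr)^2\le\Bigl(\int_{I_\ell}|g|\,dx\Bigr)^{\sigma}\bigl(\lip_{p+1}(K)\,|I_\ell|\bigr)^{2-\sigma},\qquad 0<\sigma<1,
\]
which is exactly what makes the $\ell$-sum converge (the term $A_2(k)$ in the paper). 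Your argument is missing this ingredient entirely. The remaining parts of your proposal---the weighted Cauchy--Schwarz in $p$ in place of the paper's Young inequality, the verification that $\alpha<4-\sqrt{15}$ forces the $q$-series to converge, and the bound $\sum_p C_p^2\le C\sum_j(\lip_j(K))^2$---are correct and essentially equivalent to what the paper does for the $\Sigma(0,k)$ and $A_1(k)$ terms.
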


\begin{proof}
Observe that
$$
\sup_{x\in I_0, x'\in I_m} |\Gamma_k (x,x')| \leq
$$
$$
\sum_{p=0}^{k-1} \sup_{f:\textup{Lip}_{d_p}(f)\leq 1} \Delta(0,k-p,f_p) 
+
\sum_{p=0}^{k-1} \sup_{f:\textup{Lip}_{d_p}(f)\leq 1} \Delta(m,k-p,f_p)
$$
$$
=\Sigma(0,k)+\Sigma(m,k)
$$
where 
$$
\Sigma(j,k):=\sum_{p=0}^{k-1} \sup_{f:\textup{Lip}_{d_p}(f)\leq 1} \Delta(j,k-p,f_p)
$$
for $j\geq 0$.

By Lemma \ref{quiche}
$$
\sup_{f:\textup{Lip}_{d_p}(f)\leq 1} \int  |g_{q,f_p}| d\mu\leq \Oun\
C_p \ \gamma_q^{(1-\alpha)/3}\,. 
$$
Both cases of Lemma \ref{pizza} lead to the bound
$$
\sup_{f:\textup{Lip}_{d_p}(f)\leq 1} \Delta(0,k-p,f_p)  \leq \Oun\
C_p \ \gamma_{k-p}^{(1-\alpha)/6}\,. 
$$
Since $\alpha\in[0,\a0[$, we have
$$
\sum_{q} \gamma_{q}^{(1-\alpha)/6}<\infty
$$
and Young's inequality yields
$$
\sum_{k=1}^{n} \sum_m |I_m| \ \Sigma(0,k)^2 \leq \Oun \ \sum_p C_p^2 \leq \Oun \ \sum_{j=1}^n (\lip_j(K))^2\,.
$$
We now bound
$$
\sum_{\ell} |I_\ell| \ \Sigma(\ell,k)^2\,.
$$

By Lemma \ref{pizza} we get
$$
\sum_{\ell} |I_\ell| \ \Sigma(\ell,k)^2 \leq A_1(k) + A_2(k)
$$
where
$$
A_1(k):= 8 \sum_{\ell} |I_\ell| \ \left(\sum_{p=0}^{k-1}
  \sup_{f:\textup{Lip}_{d_p}(f)\leq 1}\ \sqrt{\frac{C_p \int_{I_\ell}
      |g_{k-p,f_p}(x)| dx}{x_{\ell}}}\thinspace\right)^2 
$$
and 
$$
A_2(k):=8 \sum_{\ell} |I_\ell| \ \left(\sum_{p=0}^{k-1}
  \frac{1}{|I_\ell|} \ \sup_{f:\textup{Lip}_{d_p}(f)\leq 1} \
  \int_{I_\ell}  |g_{k-p,f_p}(x)| dx \right)^2\,. 
$$  
Observe that
$$
\int_{I_\ell}  |g_{k-p,f_p}(x)| dx \leq \frac{\Oun}{\ell}\ \int_{I_\ell}  |g_{k-p,f_p}| d\mu
\leq \frac{\Oun}{\ell}\ C_p \ \gamma_{k-p}^{\frac{1-\alpha}{3}}
$$
since $h_{|I_\ell}\sim \ell$ and  by using Lemma \ref{quiche}.
Hence,
$$
A_1(k)\leq \Oun\ \sum_\ell \frac{|I_\ell|}{\ell x_{\ell}} \ \left( 
\sum_{p=0}^{k-1} C_p\ \gamma_{k-p}^{\frac{1-\alpha}{6}}
\right)^2
$$
which implies, as above,
$$
\sum_{k=1}^n A_1(k)\leq \Oun \ \sum_{j=1}^n (\lip_j(K))^2\,.
$$
We now bound $A_2(k)$.  By Cauchy-Schwarz inequality, for any $\delta>0$, we have
$$
A_2(k)\leq \Oun \ \sum_{\ell} |I_\ell| \ \sum_{p=0}^{k-1} \frac{1}{|I_\ell|^2}\ 
\sup_{f:\textup{Lip}_{d_p}(f)\leq 1} \ \left(\int_{I_\ell}
  |g_{k-p,f_p}(x)| dx\right)^2\ (k-p)^{1+\delta}\,. 
$$
Observe that, if $\textup{Lip}_{d_p}(f)\leq 1$, then
$$
\| g_{q,f_p}\|_{L^\infty} \leq \lip_{p+1}(K) \, .
$$
Indeed, 
$$
f_p(x) - \int f_p\ d\mu =\int d\mu_x^{p,0}(z_0^p)\ f(z_0^p) 
- \int d\mu(y) \int d\mu_y^{p,0}(\xi_0^p)\ f(\xi_0^p)= 
$$
$$
\int d\mu_x^{p,0}(z_0^p)\int d\mu(y) \int d\mu_y^{p,0}(\xi_0^p)\ (f(z_0^p)-f(\xi_0^p))\,,
$$
and we use the fact that $\textup{Lip}_{d_p}(f)\leq 1$ and that $\L$ has $L^\infty$-norm equal to one.
This implies that, for any $0<\sigma<2$, 
$$
A_2(k)\leq \Oun\times
$$
$$
\sum_{\ell} \frac{1}{|I_\ell|}
\sum_{p=0}^{k-1} \
 \sup_{f:\textup{Lip}_{d_p}(f)\leq 1} \left(\int_{I_\ell}
   |g_{k-p,f_p}(x)| dx\right)^\sigma  (k-p)^{1+\delta}
 (\lip_{p+1}(K))^{2-\sigma} |I_\ell|^{2-\sigma}\,. 
$$
Using again $h_{|I_\ell}\sim \ell$, Lemma \ref{quiche} and the fact
that $\lip_p(K)\leq \Oun C_p$, we get 
$$
A_2(k)\leq \Oun
\ \sum_{\ell} \frac{|I_\ell|^{1-\sigma}}{\ell^\sigma}\
\sum_{p=0}^{k-1} \ C_p^2 \ \gamma_{k-p}^{\frac{1-\alpha}{3}\sigma}\
(k-p)^{1+\delta}\,. 
$$
Since $\alpha\in[0,\a0[$, there exist $0<\sigma<1$ and $\delta>0$ such that
$$
\sum_q 
\gamma_{q}^{\frac{1-\alpha}{3}\sigma}\ q^{1+\delta}<\infty
$$
then,
$$
\sum_{k=1}^n A_2(k)\leq \Oun \ \sum_{j=1}^n (\lip_j(K))^2\,.
$$
This ends the proof of Lemma \ref{quasidevroye}.
\end{proof}

\subsection{End of the proof}

We now conclude the proof of Theorem \ref{devroye}.
By \eqref{gogo} and \eqref{gogagi}, we have
\begin{eqnarray*}
\expectation\left(K-\expectation K\right)^2 & \leq & 
8 \sum_{i=1}^{n}(\lip_i(K))^2  \\
&& + 2\sum_{i=1}^{n}\int d\mu(x) \sum_{x':T(x')=T(x)} 
\frac{h(x')}{h(T(x))|T'(x')|}\thinspace \Gamma_k^2(x,x')\\
& = & 8\sum_k (\lip_k(K))^2 + 2 \sum_k (S_1(k)+S_2(k))\,.
\end{eqnarray*}

The theorem now follows from Lemmas \ref{crachin} and \ref{quasidevroye}.

\bigskip

\section*{Appendix}

In this appendix we prove the inequalities \eqref{ping} and \eqref{pong}
used in the proof of Lemma \ref{projo}.
We recall that the map $T$ is defined in Section \ref{defs}.

\begin{lemma}\label{ineq1}
There exists a constant $C>0$ such that for any integer $m\geq 1$ and any
pair of points $z$, $\z$ such that for $0\leq j\leq m$, $T^j (z)$ and $T^j (\z)$
belong to the same atom of the Markov partition. Then one has
$$
\frac{|z-\z|}{z}\leq C \frac{|T^m (z)-T^m (\z)|}{T^m z}\cdot
$$
\end{lemma}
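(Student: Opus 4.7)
The approach is to combine the mean value theorem with bounded distortion of $T^m$ on $m$-cylinders and the length asymptotics \eqref{AMP} of the Markov partition. First, I would apply the mean value theorem to write $|T^m z - T^m \z| = |(T^m)'(\xi)|\,|z - \z|$ for some $\xi$ between $z$ and $\z$; in particular $\xi$ lies in the same $m$-cylinder as $z$ and $\z$. Hu's bounded distortion (Proposition~2.3(ii) of~\cite{hu}, already invoked in the proof of Lemma~\ref{projo}) then yields $|(T^m)'(\xi)| \geq c_1 |(T^m)'(z)|$ for a universal $c_1 > 0$. The lemma thereby reduces to proving the uniform bound
\[
T^m z \leq C_0\, z\, |(T^m)'(z)|
\]
for some constant $C_0$ independent of $m$ and $z$.

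Second, I would establish this reduced inequality by applying bounded distortion once more, now on the whole $m$-cylinder $[a,b]$ consisting of the points having the same itinerary $(\ell_0,\ldots,\ell_m)$ as $z$. Since $T^m([a,b]) = I_{\ell_m}$, distortion yields $|b-a| \asymp |I_{\ell_m}|/|(T^m)'(z)|$. Combining this with $z \asymp x_{\ell_0}$, $T^m z \asymp x_{\ell_m}$, and the asymptotics $x_\ell \sim \ell^{-1/\alpha}$, $|I_\ell| \sim \ell^{-1/\alpha - 1}$, the target inequality transforms into $|b - a| \lesssim x_{\ell_0}/\ell_m$, which I would verify by evaluating $|b-a|$ through the Markov structure: along a single forced descent $I_\ell \to I_{\ell-1} \to \ldots \to I_0$, Hu's derivative estimate $|(T^\ell)'(z)| \asymp \ell^{(1+\alpha)/\alpha}$ on $I_\ell$ gives a contraction factor $\asymp \ell^{-(1+\alpha)/\alpha}$ on the cylinder length, and these factors telescope across the successive descents composing the itinerary.

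The main obstacle will be the combinatorial bookkeeping when the itinerary involves several excursions from $I_0$. For a single excursion $I_{\ell_0} \to \ldots \to I_0 \to I_{\ell_m}$, the inequality reduces to the trivial $\ell_m \gtrsim \ell_0^{-\alpha}$; for several excursions one iterates the same comparison, using at each step Hu's cylinder distortion together with the explicit form of $T$ on $[0,1/2)$ to compensate any losses at the visits to $I_0$ by the gains accumulated along the subsequent forced descent.
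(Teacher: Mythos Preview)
Your reduction via the mean value theorem and bounded distortion to the derivative estimate $|(T^m)'(z)|\gtrsim T^m(z)/z$ is exactly what the paper does (it proves the slightly stronger $|(T^m)'(z)|\geq C_0\,(T^m(z)/z)^{1+\alpha}$ and handles the trivial case $T^m(z)\leq z$ separately using $|T'|\geq 1$). So the first half of your plan is correct and matches the paper.

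Where you diverge is in proving the reduced inequality. You propose to control the whole $m$-cylinder length $|b-a|$ by tracking the full itinerary $(\ell_0,\dots,\ell_m)$ through all its excursions from $I_0$, and you flag the resulting ``combinatorial bookkeeping'' as the main obstacle. This bookkeeping is unnecessary. The paper's argument is a first-passage observation: let $q\leq m$ be the first time $T^q(z)\in I_0$ (or $q=m$ if this never happens). Up to time $q$, the map $T^q$ is a diffeomorphism from $I_{\ell_0}$ onto $I_{\ell_0-q}$, and distortion together with \eqref{AMP} gives $|(T^q)'(z)|\gtrsim (T^q(z)/z)^{1+\alpha}$. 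If $q=m$ we are done. If $q<m$, then $T^q(z)\geq 1/2$, so $|(T^q)'(z)|\gtrsim z^{-(1+\alpha)}$; since $|(T^{m-q})'|\geq 1$ and $T^m(z)\leq 1$, this already yields $|(T^m)'(z)|\gtrsim (T^m(z)/z)^{1+\alpha}$. The later excursions contribute nothing to the estimate because they can only increase the derivative while $T^m(z)$ is bounded by $1$.

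Your approach is not wrong --- the single-excursion computation you sketch is correct, and the multi-excursion case could presumably be pushed through --- but it is working much harder than needed. The paper's first-exit argument replaces your inductive telescoping over excursions by a single step.
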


\begin{proof}
We start by proving the following inequality:
\begin{equation}\label{BSD}
|T^{m'} (z)| \geq C_0 \left(\frac{T^m (z)}{z}\right)^{1+\alpha}
\end{equation}
where $C_0>0$ is independent of $m$ and $z$.

There are two cases.

If $z\geq1/2$, the inequality is true provided that $C_0\leq 2^{-(1+\alpha)}$. 

Now consider the case $z< 1/2$. 
We define an integer $q\leq m$ as follows.
If $T^j (z) \leq 1/2$ for $j=0,1,\ldots,m-1$ then we take $q=m$.
Otherwise $q$ is the smallest integer sucht that $T^q (z)\geq  1/2$.
Since $z<1/2$, there is an integer $\ell\geq 1$ such that $z\in I_\ell$.
Moreover $T^q$ is a diffeomorphism from $I_\ell$ to $I_{\ell-q}$.
From the distortion lemma, see {\em e.g.} \cite[Proposition 2.3]{hu}, we get
$$
|T^{q'} (z)| \geq C_1 \left(\frac{I_{\ell-q}}{I_\ell}\right)^{1+\alpha}
$$
where $C_1>0$ is independent of $q$ and $z$. 
From \eqref{AMP} it follows that 
$$
|T^{q'} (z)| \geq C_2 \left(\frac{T^q (z)}{z}\right)^{1+\alpha}
$$
where $C_2>0$ is independent of $q$ and $z$. 
If $q=m$ then \eqref{BSD} is proved with $C_0=\min(C_2,2^{-(1+\alpha)})$. If $q<m$ then
we observe that
$$
|T^{m'} (z)| = |T^{(m-q)'} (T^q (z))| |T^{q'} (z)| \geq C_2 \left(\frac{T^q (z)}{z}\right)^{1+\alpha}
$$
because $|T^{(m-q)'}|\geq 1$.
Since $T^q (z) \geq 1/2$, we obtain
$$
|T^{m'} (z)| \geq \frac{C_2}{2^{1+\alpha}} \frac{1}{z^{1+\alpha}} \geq  
\frac{C_2}{2^{1+\alpha}} \left(\frac{y}{z}\right)^{1+\alpha}\cdot
$$
This finishes the proof of inequality \eqref{BSD}.

To prove the lemma, we first observe that if $T^m (z)\leq z$ then
$$
|z-\z|\leq |T^m (z) - T^m (\z)| \leq \frac{z}{T^m (z)}\thinspace |T^m (z) - T^m (\z)| 
$$
because the modulus of the $T'$ is larger than or equal to one.
The remaining case is when $T^m (z) > z$. We observe that
$$
|T^m (z) - T^m (\z) | = \left| \int_z^{\z} T^{m'} (\xi) d\xi \right| =\int_z^{\z} |T^{m'} (\xi)| d\xi \geq 
\tilde{C} \left(\frac{T^m (z)}{z}\right)^{1+\alpha}
$$
where we used again the distortion estimates (\cite[Proposition 2.3]{hu}),
\eqref{AMP}, the monotonicity of $T^m$, and where $\tilde{C}>0$ is
independent $m,z,\z$. This immediately implies
$$
\frac{|z-\z|}{z}\leq \frac{1}{\tilde{C}} \left(\frac{z}{T^m (z)}\right)^\alpha \frac{|T^m (z)-T^m (\z)|}{T^m z}\cdot
$$
The Lemma is proved.
\end{proof}

\begin{lemma}\label{ineq2}
There exists a constant $C>0$ such that for any integer $m\geq 1$ and any
pair of points $z$, $\z$ such that for $0\leq j\leq m$, $T^j (z)$ and $T^j (\z)$
belong to the same atom of the Markov partition. Then one has
$$
|z-\z|\leq \frac{C}{(m+1)^{1/\alpha}} \frac{|T^m (z)-T^m (\z)|}{T^m (z)}\cdot
$$
\end{lemma}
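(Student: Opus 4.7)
The plan is to combine Lemma~\ref{ineq1} with a derivative estimate derived from the excursion structure of the orbit $z,Tz,\ldots,T^m(z)$, exploiting both the polynomial expansion incurred on intermittent descents and the exponential expansion on the branch $[1/2,1]$. Applying Lemma~\ref{ineq1} at an intermediate time $j\in[0,m]$ to the orbit segment of length $m-j$, and combining it with $|T'|\geq 1$ (so $|z-\tilde z|\leq |T^j(z)-T^j(\tilde z)|$), I would obtain
$$
|z-\tilde z|\leq C\, T^j(z)\cdot\frac{|T^m(z)-T^m(\tilde z)|}{T^m(z)},
$$
which reduces the statement to exhibiting some $j^*\in[0,m]$ with $T^{j^*}(z)\leq C'(m+1)^{-1/\alpha}$. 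Equivalently, via the mean value theorem and the bounded distortion on the atom containing $z,\tilde z$, the task is the derivative lower bound $|(T^m)'(z)|\geq C(m+1)^{1/\alpha}\,T^m(z)$.

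For this lower bound I would exploit the Markov structure from Section~\ref{defs}: $T$ sends $I_\ell$ bijectively onto $I_{\ell-1}$ for every $\ell\geq 1$, so inside the intermittent region the orbit is monotonically descending. Thus $z,Tz,\ldots,T^m(z)$ decomposes into $p$ complete \emph{excursions} $I_{L_i}\to I_{L_i-1}\to\cdots\to I_1\to I_0$ (with depths $L_1,\ldots,L_p$) plus $K$ iterates in $I_0$, together with possible partial initial and final excursions. By bounded distortion for the bijection $T^{L_i}\colon I_{L_i}\to I_0$ together with $|I_L|\sim L^{-1-1/\alpha}$ from \eqref{AMP}, each complete excursion contributes a derivative factor $\sim L_i^{1+1/\alpha}$, and each iterate in $I_0$ contributes at least $\lambda:=\min_{[1/2,1]}|T'|>1$. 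This gives
$$
|(T^m)'(z)|\;\geq\; C\,\lambda^K\prod_{i=1}^p L_i^{1+1/\alpha}.
$$

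Combined with $T^m(z)\leq 1$, the required inequality $|(T^m)'(z)|\geq C(m+1)^{1/\alpha}T^m(z)$ would follow from a case analysis: when $K\geq m/2$, the exponential factor $\lambda^{m/2}$ dominates any polynomial; when $K<m/2$, one has $\sum_iL_i\geq m/2$ and an AM--GM-type estimate gives $\prod L_i^{1+1/\alpha}\gtrsim L_{\max}^{1+1/\alpha}\gtrsim (m/(K+1))^{1+1/\alpha}$, which majorises $(m+1)^{1/\alpha}$ as long as $K\leq m^{\alpha/(1+\alpha)}$, while the opposite regime $K\geq m^{\alpha/(1+\alpha)}$ is already covered by the exponential factor $\lambda^K$. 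The main obstacle is the coherent combination of these two sources of expansion in the intermediate regime where $K$ and $\sum L_i$ are comparable, and the absorption of boundary effects from partial first/last excursions into universal constants; I expect that a one-parameter optimisation in $K$, uniform in $\alpha\in[0,1)$, ties everything together.
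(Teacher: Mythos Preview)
Your reduction to the derivative inequality $|(T^m)'(z)|\geq C(m+1)^{1/\alpha}\,T^m(z)$ is correct and, by bounded distortion, genuinely equivalent to the lemma. The gap is in the next sentence, where you discard the factor $T^m(z)$ via $T^m(z)\leq 1$ and thereafter aim at the \emph{stronger} bound $|(T^m)'(z)|\geq C(m+1)^{1/\alpha}$. That stronger bound is false. Take $z\in I_0$ with $T(z)\in I_{2m}$: the orbit is one step on the expanding branch followed by the incomplete descent $I_{2m}\to\cdots\to I_{m+1}$, so $K=1$, there is no complete excursion, and distortion gives $|(T^m)'(z)|\sim |T'(z)|\cdot|I_{m+1}|/|I_{2m}|\sim 2^{1+1/\alpha}$, bounded independently of $m$. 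Here the final partial excursion is not a boundary effect absorbable into a universal constant; it is exactly where the factor $T^m(z)\sim(m+1)^{-1/\alpha}$ on the right-hand side does the work. The repair is already implicit in your first reduction: whenever $\min_{0\leq j\leq m}T^j(z)\leq(m+1)^{-1/\alpha}$ (as at $j=1$ in the example), the Lemma~\ref{ineq1} argument with that $j^*$ concludes; only in the complementary regime, where every $T^j(z)$ stays above this threshold and hence every excursion has depth of order at most $m$, do you need the excursion analysis, and there your case split in $K$ can indeed be pushed through.

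The paper's argument bypasses the excursion combinatorics entirely. After the easy case $T^m(z)<(m+1)^{-1/\alpha}$, it lets $\ell$ be the index with $T^m(z)\in I_\ell$ and compares $z$ with the unique preimage $z_*\in I_{\ell+m}$ of $T^m(z)$ whose whole orbit lies on the intermittent branch. Since $z_*$ is the preimage closest to the indifferent fixed point, it experiences the least expansion among all $m$-th preimages, so $|(T^m)'(z)|\geq c\,|(T^m)'(z_*)|$. A single distortion estimate on the block $I_{\ell+m}\to I_\ell$ then gives $|(T^m)'(z_*)|\sim((\ell+m)/\ell)^{1+1/\alpha}$, and combining with $T^m(z)\sim\ell^{-1/\alpha}$ and the elementary inequality $\ell/(\ell+m)^{1+1/\alpha}\leq(1+m)^{-1/\alpha}$ finishes the proof. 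This comparison with the slowest preimage replaces your multi-excursion optimisation by one intermittent block.
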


\begin{proof}
Observe that if $T^m (z) <\frac{1}{(m+1)^{1/\alpha}}$ then the estimate follows at once
from the fact that the modulus of the derivative of $T$ is larger than or equal to one.
So we now assume that $T^m (z) \geq \frac{1}{(m+1)^{1/\alpha}}$. 
Let $\ell\geq 0$ be the integer such that $T^m z\in I_\ell$. There exists a unique $z_*$ in $I_{\ell+m}$
such that $T^m (z_*) = T^m (z)$. 
Since $z_*$ is the closest  $T^{-m}$-preimage of $T^m(z)$ to the neutral fixed point $0$, one can
easily show that there is a constant $c>0$ such that, for any $m$ and $z$,
one has $|T^{m'} (z)| \geq c |T^{m'} (z_*)|$.

As in the proof of the previous lemma, we use the distortion estimates (\cite[Proposition 2.3]{hu}) and \eqref{AMP}
to obtain
$$
|T^{m'} (z)| \geq c' \frac{(\ell+m)^{1+\frac{1}{\alpha}}}{\ell^{1+\frac{1}{\alpha}}}
$$
From the distortion estimates (\cite[Proposition 2.3]{hu}) we get, using that $T^m (z)\in I_\ell$,
$$
|T^m (z) - T^m (\z) | \geq  \Oun |T^{m'} (z)|\thinspace |z - \z | \geq
\Oun |T^m (z)| \thinspace \frac{(\ell+m)^{1+\frac{1}{\alpha}}}{\ell} \thinspace |z - \z |.
$$
This can be rewritten as
$$
|z - \z | \leq \Oun \frac{\ell}{(\ell+m)^{1+\frac{1}{\alpha}}} \frac{|T^m (z) - T^m (\z) |}{|T^{m'} z|}\leq 
\Oun \frac{1}{(1+m)^{\frac{1}{\alpha}}} \frac{|T^m (z) - T^m (\z) |}{|T^{m'} (z)|}\cdot
$$
The proof of the lemma is complete.
\end{proof}


\end{document}